\documentclass{siamltex}
\usepackage{amsmath}
\usepackage{amsfonts}
\usepackage{amssymb}
\usepackage{graphicx}
\usepackage{color}
\usepackage{hyperref}

\hypersetup{
	colorlinks=true,   
	urlcolor=blue,
	citecolor=blue,   
	linkcolor=blue}
\usepackage{bm}
\usepackage{verbatim}

\usepackage{bmpsize}
\usepackage{algorithm}
\usepackage[noend]{algpseudocode}
\allowdisplaybreaks

\usepackage[left=1.5in, top=1in, right=1in, bottom=1in]{geometry}

\newtheorem{Aalgorithm}[theorem]{Algorithm}

\newcommand{\be}{\begin{equation}}
	\newcommand{\ee}{\end{equation}}
\newcommand{\bea}{\begin{eqnarray}}
	\newcommand{\eea}{\end{eqnarray}}
\newcommand{\beas}{\begin{eqnarray*}}
	\newcommand{\eeas}{\end{eqnarray*}}

\newcommand{\bfu}{\ensuremath{\mathbf{u}}}

\newcommand{\vertiii}[1]{{\left\vert\kern-0.25ex\left\vert\kern-0.25ex\left\vert #1 
		\right\vert\kern-0.25ex\right\vert\kern-0.25ex\right\vert}}

\makeatletter
\def\BState{\State\hskip-\ALG@thistlm}
\makeatother

\graphicspath{ {./result/} }
\usepackage{booktabs}
\begin{document}
\title{A family of second order, linear, unconditionally stable methods for the Cahn-Hilliard-Navier-Stokes equations}
	
	\author{
    Daozhi Han$^{\text{a}}$, Nan Jiang$^{\text{b},\dag}$, Jonah H. Nissan$^{\text{c}}$, Sayantan Sarkar$^{\text{a},\ast}$\\[2ex]
    $^{\text{a}}$\textit{Department of Mathematics, State University of New York at Buffalo, Buffalo, NY 14260}\\
    $^{\text{b}}$\textit{Department of Mathematics, University of Florida, Gainesville, FL 32611}\\
    $^{\text{c}}$\textit{College of Liberal Arts and Sciences, University of Florida, Gainesville, FL 32611}
}

\maketitle

\renewcommand{\thefootnote}{\fnsymbol{footnote}}
\footnotetext[1]{Corresponding author. \\
\indent \textit{Email addresses:} \texttt{daozhiha@buffalo.edu} (Daozhi Han), \texttt{jiangn@ufl.edu} (Nan Jiang), \texttt{jonah.nissan@ufl.edu} (Jonah H. Nissan), \texttt{sayantansark@umass.edu} (Sayantan Sarkar)}
\footnotetext[2]{Nan Jiang and Jonah H. Nissan were partially supported by the US National Science Foundation grant DMS-2143331.}
\renewcommand{\thefootnote}{\arabic{footnote}} %
   
    \vspace{2ex}
    \noindent\rule{\textwidth}{0.4pt}
    \vspace{1ex}

    \noindent \textbf{Abstract} \\[1ex]

We present a family of second-order, linear, unconditionally stable implicit-explicit (IMEX) methods for the Cahn-Hilliard-Navier-Stokes (CHNS) equations modeling matched-density two-phase flows. The proposed semi-discrete scheme combines extrapolation of the nonlinear terms with an auxiliary-variable formulation of the nonlinear free-energy term and a temporal-curvature regularization controlled by a parameter $\epsilon$. We establish a discrete energy estimate showing unconditional long-time stability of the method for $\theta\in(1/2,1]$ and $\epsilon\geq0$. The resulting scheme requires only linear solves at each time step. Numerical experiments demonstrate approximately second-order temporal convergence and examine mass conservation, energy dissipation, numerical robustness, and several representative interfacial-flow problems, including spinodal decomposition, droplet shape relaxation, two-phase lid-driven cavity flow, and Rayleigh--Taylor instability.

    \vspace{2ex}
    \noindent \textit{Keywords:} phase field models,  Cahn-Hilliard-Navier-Stokes, finite element method, unconditional stability

    \vspace{1ex}
    \noindent\rule{\textwidth}{0.4pt}
    \vspace{4ex}

\section{Introduction}

The accurate mathematical modeling and computational simulation of complex multiphase fluid dynamics remain challenging problems in computational mechanics. Classical approaches to immiscible fluid interactions include sharp-interface and interface-capturing methods, such as the Volume of Fluid and Level-Set methods, which require dedicated procedures for interface advection, reconstruction, or tracking \cite{Anderson1998, Magaletti2013}. Alternatively, diffuse-interface (phase-field) models represent material interfaces through smooth transitions across a finite-thickness interfacial region \cite{Lowengrub1998, AbelsLengeler2014}. The Cahn-Hilliard-Navier-Stokes (CHNS) equations provide a canonical framework for such systems \cite{LiuShen2003, Boyer1999}.

By introducing a phase function $\phi(x,t)$ that continuously represents the mixture composition, the CHNS system naturally accommodates topological transitions, such as droplet coalescence and pinch-off, without requiring explicit interface reconstruction. The phase-field dynamics are governed by a Ginzburg-Landau-type free-energy functional \cite{Abels2009}:
\begin{equation}
    W(\phi,\nabla\phi)
    =
    \int_{\Omega}
    \lambda
    \left(
    \frac{1}{2}|\nabla\phi|^{2}
    +
    F(\phi)
    \right)
    dx,
    \label{eq:free_en}
\end{equation}
where
\[
F(\phi)
=
\frac{1}{4\eta^{2}}(\phi^{2}-1)^{2}
\]
represents the double-well bulk energy, while the gradient term penalizes spatial variations of the phase field and contributes to the interfacial energy \cite{GiorginiMiranville2019}. The governing equations couple the incompressible Navier-Stokes momentum balance with the fourth-order Cahn-Hilliard convection-diffusion equation. The mathematical analysis of this class of systems includes thermodynamically consistent formulations and well-posedness results \cite{AbelsGarckeGrun2012}, global and trajectory attractors \cite{GalGrasselli2010, GiorginiTemam2022, FrigeriGrasselli2012}, and convergence-to-equilibrium results based on \L{}ojasiewicz-Simon inequalities \cite{ZhaoWuHuang2009}. Complex boundary dynamics have also been modeled by incorporating generalized Navier boundary conditions (GNBC) and dynamic contact-angle effects \cite{QianWangSheng2003, QianWangSheng2006, YueFeng2011, GiorginiKnopf2023}. In the absence of external forcing, the continuous CHNS system satisfies the energy-dissipation law \cite{Abels2009, LiuShen2003}:
\begin{equation}
    \frac{d}{dt}
    \left\{
    W(\phi,\nabla\phi)
    +
    \int_{\Omega}\frac{1}{2}|u|^{2}\,dx
    \right\}
    =
    -\int_{\Omega}
    \left(
    M|\nabla\mu|^{2}
    +
    \nu|\nabla u|^{2}
    \right)
    dx.
    \label{eq:en_law}
\end{equation}
Preserving an appropriate counterpart of this dissipation structure at the discrete level is an important consideration in the design of numerical schemes for the CHNS system \cite{Feng2006, HanWang2017}.

The mathematical stiffness of the fourth-order Cahn-Hilliard operator, together with its nonlinear coupling to the fluid equations, can make straightforward explicit time-stepping severely restrictive. In particular, explicit treatments of the fourth-order diffusion operator can lead to time-step restrictions as severe as $\Delta t \sim \mathcal{O}(\Delta x^{4})$ \cite{GuillenGonzalez2013, DongShen2010}. Consequently, researchers have developed a wide range of spatial and temporal discretization strategies for phase-field flow models.

Spatially, primal conforming formulations of the fourth-order operator require globally $C^{1}$-continuous basis functions. While Isogeometric Analysis naturally provides the required smoothness \cite{Gomez2008, Hosseini2016}, classical finite element spaces with global $C^{1}$ continuity are comparatively difficult to construct, particularly in three dimensions. Mixed finite element methods based on $C^{0}$ spaces are therefore widely used, with the chemical potential introduced as an additional unknown to reduce the fourth-order equation to a coupled system of second-order equations \cite{KayStylesWelford2008, BrennerScott2008, Gunzburger1989}. Rigorous error estimates and adaptive mesh-refinement strategies have been developed for such mixed formulations \cite{DiegelFengWise2016, Hintermuller2013}. Alternatively, Discontinuous Galerkin (DG) methods provide local approximation flexibility and are well suited to adaptive discretizations \cite{LiuRiviere2020, GuoXu2019, Manzanero2020, GiesselmannPryer2015}. Structured-grid approaches have also been extensively studied, including mass-conservative finite-difference methods on staggered MAC grids \cite{Guo2017, Balashov2020}, Lattice Boltzmann methods \cite{Liang2014, Zheng2015}, and high-order spectral solvers \cite{Roccon2025}.

Temporally, the development of energy-stable methods has driven substantial algorithmic progress. Fully implicit energy-stable schemes can retain an appropriate discrete energy-dissipation structure but generally require solving large coupled nonlinear systems \cite{Feng2006, KIM2004}. Convex-splitting approaches provide unconditional energy stability for phase-field equations \cite{Eyre1998, HeLiuTang2007}, although their implicit treatment of nonlinear terms can require nonlinear iterative solvers at each time step \cite{Layton2008, LaytonTrenchea2012}. Researchers subsequently developed stabilized semi-implicit schemes to avoid nonlinear iterations by introducing additional stabilization terms \cite{ShenYang2010, WangYu2018, Du2018}. Such schemes yield linear systems, although the choice and magnitude of the stabilization parameter can influence numerical dissipation and accuracy. Another important class of approaches is based on Invariant Energy Quadratization (IEQ) \cite{YANG2017} and Scalar Auxiliary Variable (SAV) formulations \cite{ShenXuYang2018}, which rewrite nonlinear free-energy contributions in quadratic form through auxiliary variables. In CHNS applications, SAV and multiple-SAV formulations can lead to linear, fully decoupled schemes that require only constant-coefficient elliptic solves at each time step \cite{LiShen2022}. The corresponding stability laws are typically formulated in terms of an auxiliary or modified discrete energy rather than solely in terms of the original phase and velocity variables \cite{LiShen2022_MSAV}. For the incompressible fluid subsystem, spatial discretizations commonly employ inf-sup stable velocity-pressure pairs, while fractional-step and projection methods are widely used to reduce the coupling between velocity and pressure \cite{Chorin1968, Guermond2006}. Researchers have since developed various stabilization, penalty, projection, and splitting strategies to improve the accuracy and robustness of these temporal discretizations \cite{Decaria2017, Hurl2014, JiangLayton2015, JiangTran2015, JiangMohebujjaman2016}. Concurrently, data-driven and physics-informed computational frameworks have emerged for phase-field and multiphase-flow problems \cite{Qiu2025, Mattey2022, Zhu2023, Bamdad2024}, although the robust enforcement of conservation laws and discrete physical structure remains an active area of research.

The numerical simulation of complex, highly nonlinear fluid dynamics can impose severe restrictions on the time-step size required for stable and accurate computations, particularly for strongly coupled or under-resolved flows. One strategy for improving robustness without reducing the formal temporal order is to introduce discrete regularization based on the temporal curvature of the numerical solution. Jiang et al. \cite{JiangMohebujjaman2016} developed an optimally accurate discrete regularization for second-order time-stepping methods for the Navier-Stokes equations. By incorporating stabilization terms proportional to the discrete temporal curvature of the solution, they obtained a family of second-order time-stepping methods designed to improve stability while retaining second-order temporal accuracy. Building on this framework, Jiang and Yang \cite{JiangYang2024} extended curvature-based regularization to ensemble simulations of incompressible flows using a regularization parameter $\epsilon>0$. Their analysis and numerical experiments show that the regularization can substantially improve the robustness of the corresponding flow computations, particularly in regimes where weakly dissipative or unregularized time discretizations exhibit oscillatory behavior. Motivated by these developments, the proposed linear IMEX scheme incorporates an analogous temporal-curvature regularization mechanism, controlled by $\epsilon\geq0$, for the Cahn-Hilliard-Navier-Stokes system.

Despite substantial progress in second-order, energy-stable, linear, and decoupled algorithms for CHNS-type systems \cite{HanWang2017, LiShen2022}, interest remains in simple time-discretization frameworks that combine second-order temporal accuracy, unconditional stability, linear treatment of nonlinear couplings, and a tunable regularization mechanism within a unified formulation. In this article, we propose a family of second-order, linear, unconditionally stable methods for solving the CHNS equations modeling two-phase flows of matched density.

Our primary contribution is a fully linear semi-discrete algorithm that treats the stiff nonlinear couplings explicitly through extrapolated quantities while incorporating temporal-curvature regularization via the parameter $\epsilon$. By establishing a set of symmetric positive definite matrices, we define the corresponding discrete norms and present a mathematical proof that the proposed scheme satisfies a discrete energy-dissipation law without a time-step restriction. The resulting formulation requires only solving linear systems at each time step and provides a flexible family of second-order methods parameterized by the temporal discretization and regularization parameters.

The remainder of this article is organized as follows. In Section \ref{model}, we introduce the continuous CHNS model and present the proposed semi-discrete numerical scheme. Section \ref{uncon_stab} introduces discrete matrix norms and establishes the scheme's unconditional stability. Finally, Section \ref{nume} presents numerical experiments examining temporal convergence, energy behavior, numerical robustness, and representative multiphase-flow benchmarks.

    \section{The model and the numerical scheme}\label{model}
	
	\subsection{The model}
	As an example we present our numerical algorithm for solving the Cahn-Hilliard-Navier-Stokes system that models two-phase flows of matched density. Similar schemes can be constructed for other phase field fluid models.
	We consider a mixture of two immiscible, incompressible fluids in a bounded Lipschitz domain $\Omega$ in $\mathbb{R}^d$ $(d=2, 3)$ with matched density assumed to be unity for simplicity of presentation. Introducing a phase function $\phi$ such that
	\begin{equation}\label{phase-function}
		\phi(x,t)\approx \left\{\begin{aligned}
			&1, \qquad  \text{for fluid 1},\\
			&-1, \quad \text{for fluid 2},
		\end{aligned}\right.
	\end{equation}
	we adopt the Ginzburg-Landau type free energy associated with the binary system
	\begin{align}\label{free_en}
		W(\phi, \nabla \phi)=\int_{\Omega} \lambda \left( \frac{1}{2}\vert \nabla \phi \vert^2+F(\phi)\right)\, dx.
	\end{align}
	In \eqref{free_en} the first term contributes to the hydrophilic type (tendency of mixing) of interactions between the materials while the second part, the double-well bulk energy $F(\phi)=\frac{1}{4\eta^2}(\phi^2-1)^2$, represents the hydrophobic type (tendency of separation) of interactions. As the consequence of the competition between the two types of interactions, the equilibrium configuration will include a diffusive interface with thickness proportional to the parameter $\eta$.

	The governing equations are the following  Cahn-Hilliard-Navier-Stokes (CHNS) system:
	\begin{equation}\label{eq:CHNS}
		\left\{\begin{aligned}
			& \phi_{t}+\nabla \cdot{(\phi\bfu)}  =M \Delta \mu,\\
			&\mu =\lambda (-\Delta \phi  + f(\phi)),\\
			&\bfu_t+ \bfu\cdot \nabla \bfu + \nabla p -\nu \Delta \bfu=-\phi \nabla \mu,\\
			&\nabla \cdot \bfu = 0,
		\end{aligned}\right.
	\end{equation}
	equipped with the boundary conditions
	$$\bfu|_{\partial \Omega}=0, \quad \partial_{n}\phi |_{\partial\Omega}=0, \quad \partial_n\mu|_{\partial\Omega}=0.$$
	Assuming no external forcing other than gravity, the CHNS system satisfies an energy law, i.e.
	\begin{align}\label{en_law}
		\frac{d}{dt}\left\{W(\phi, \nabla\phi)+\int_{\Omega}\frac{1}{2}|\mathbf{u}|^2dx\right\}=-\int_{\Omega}M|\nabla \mu|^2+\nu |\nabla \mathbf{u}|^2 dx. 
	\end{align}

	\subsection{The methods}
	In this section we introduce the algorithm in the semi-discrete form and establish its unconditional long-time stability.
	Throughout, the $L^2(\Omega)$ norm of scalars, vectors, and tensors will be denoted by $\Vert \cdot\Vert$ with the usual $L^2$ inner product denoted by $(\cdot, \cdot)$.

	Denote the interpolation at $t=t_{n+\theta}$ by 
	$${\cal J}_{n+\theta}^{\epsilon}(a):=\theta \frac{\nu+\epsilon}{\nu}a_{n+1}+(1-\theta\frac{\nu+2\epsilon}{\nu})a_n+\theta \frac{\epsilon}{\nu} a_{n-1},$$
	and extrapolation at $t=t_{n+\theta}$ by
	$${\cal H}_{n+\theta}(a):=(\theta+1) a_n-\theta a_{n-1},$$ from \cite{JiangYang2024}.
	
	Let $q=\frac{1}{\eta^2}(\phi^2-1)$ and thus $f(\phi)=\phi q$. Taking the derivative of $q$ with respect to time gives $q_t=\frac{2}{\eta^2}\phi\phi_t$, which can be discretized as follows.
	$$\frac{(\theta+\frac{1}{2})q^{n+1}-2\theta q^n+(\theta - \frac{1}{2})q^{n-1}}{\Delta t}=\frac{2}{\eta^2} {\cal H}_{n+\theta}(\phi) \frac{(\theta+\frac{1}{2})\phi^{n+1}-2\theta \phi^n+(\theta - \frac{1}{2})\phi^{n-1}}{\Delta t},$$
	
	\noindent where $\theta\in \left( \frac{1}{2} \text{, } 1 \right], \epsilon \geq 0$.
	
	We then propose a family of second-order,  linear, unconditionally stable methods for the CHNS system given by
	
	\begin{Aalgorithm}\label{Algo-CHNS}
		Given $\bfu^{n-1}$, $\bfu^n$, $p^{n-1}$, $p^n$, $\phi^{n-1}$, $\phi^n$, $q^{n-1}$, $q^n$, and $\mu^{n-1}$, find $\bfu^{n+1}$, $p^{n+1}$, $\phi^{n+1}$, $q^{n+1}$, and $\mu^n$ satisfying
		\begin{align}
			&\frac{(\theta+\frac{1}{2})\phi^{n+1}-2\theta \phi^n+(\theta - \frac{1}{2})\phi^{n-1}}{\Delta t}+\nabla \cdot \left({\cal H}_{n+\theta}(\phi) {\cal J}^{\epsilon}_{n+\theta}(u) \right)  -M \Delta {\cal H}_{n+\theta}(\mu)=0,\label{CHNS1}\\
			& {\cal H}_{n+\theta}(\mu)=\lambda\left(- \Delta {\cal J}^{\epsilon}_{n+\theta}(\phi)+{\cal H}_{n+\theta}(\phi) {\cal J}^{\epsilon}_{n+\theta}(q) \right),\label{CHNS2}\\
			&
			\frac{(\theta+\frac{1}{2})q^{n+1}-2\theta q^n+(\theta - \frac{1}{2})q^{n-1}}{\Delta t}=\frac{2}{\eta^2} {\cal H}_{n+\theta}(\phi) \frac{(\theta+\frac{1}{2})\phi^{n+1}-2\theta \phi^n+(\theta - \frac{1}{2})\phi^{n-1}}{\Delta t}, \label{CHNS2a}\\
			&
			\frac{(\theta+\frac{1}{2})u^{n+1}-2\theta u^n+(\theta - \frac{1}{2})u^{n-1}}{\Delta t}+{\cal H}_{n+\theta}(u) \cdot \nabla \left({\cal J}^{\epsilon}_{n+\theta}(u) \right)
			\nonumber\\
			&-\nu\Delta  {\cal J}^{\epsilon}_{n+\theta}(u)+\nabla {\cal J}^{\epsilon}_{n+\theta}(p) +{\cal H}_{n+\theta}(\phi)\nabla{\cal H}_{n+\theta}(\mu) =0, 
			\label{CHNS3}\\
			&\nabla \cdot \bfu^{n+1}=0,\label{CHNS4} \\
			&\bfu^{n+1}|_{\partial \Omega}=0, \quad \nabla \phi^{n+1}\cdot n|_{\partial \Omega}=0, \quad  \nabla \mu^{n}\cdot n|_{\partial \Omega}=0. \label{CHNS5}
		\end{align}
		
	\end{Aalgorithm}
	
	\section{Unconditional Stability}\label{uncon_stab}
	
	Define the symmetric positive definite matrix $F\in \mathbb{R} ^{n\times n}$ by
	\begin{equation*}
		F=\theta (2\theta -1) I + \frac{4\theta^2\epsilon}{\nu} I,
	\end{equation*}
	and the symmetric matrix $G \in \mathbb{R}^{2n\times 2n}$ as follows.
	\[ G=\left(\begin{matrix}
		\frac{\theta(2\theta +3)}{4}\frac{\nu+\epsilon}{\nu} I- \frac{\theta(2\theta+1)}{4}\frac{\epsilon}{\nu} I  & -(\frac{(\theta+1)(2\theta-1)}{4}\frac{\nu+\epsilon}{\nu}I +\frac{(1-\theta)(2\theta+1)}{4}\frac{\epsilon}{\nu}I)  
		\vspace{.2cm}\\
		-(\frac{(\theta+1)(2\theta-1)}{4}\frac{\nu+\epsilon}{\nu}I+\frac{(1-\theta)(2\theta+1)}{4}\frac{\epsilon}{\nu} I) &  \frac{\theta(2\theta -1)}{4}\frac{\nu+\epsilon}{\nu} I+ \frac{\theta(-2\theta+3)}{4}\frac{\epsilon}{\nu} I  
	\end{matrix} \right).
	\]
	For any ${\bf u}, {\bf v} \in \mathbb{R}^{n}$, define $F-$norm of the $n$ vector ${\bf u}$:
	\begin{equation*}
		\|{\bf u}\|_F^2=\left( {\bf u}, F{\bf u} \right),
	\end{equation*}
	\noindent which is non-negative, and $G-$norm of the $2n$ vector $\left[ \begin{matrix} {\bf u}\cr {\bf v} \end{matrix} \right]$:
	\begin{equation*}
		{\left\| \left[ \begin{matrix} {\bf u}\cr {\bf v} \end{matrix} \right]\right\|} ^2_G=\left(\left[ \begin{matrix} {\bf u}\cr {\bf v} \end{matrix} \right], G\left[ \begin{matrix} {\bf u}\cr {\bf v} \end{matrix} \right]\right) ,
	\end{equation*}
	whose value could be negative.
	\begin{lemma} \label{lm:G-form}
		For any vector ${\bf u}$ , ${\bf v} \in \mathbb{R}^n$, we have, \cite{JiangMohebujjaman2016},
		\begin{align}\label{G1}
			\Big(\Big[ \begin{matrix} {\bf u}\cr {\bf v} \end{matrix} \Big], G\Big[ \begin{matrix} {\bf u}\cr {\bf v} \end{matrix} \Big]\Big) &=
			\frac{2\theta +1}{4}  \Vert {\bf u}\Vert^2 + \frac{-2\theta +1}{4} \Vert {\bf v} \Vert^2
			\\
			&\quad
			+\frac{(\theta+1)(2\theta-1)}{\color{black} 4}\Vert {\bf u}-{\bf v}\Vert^2 
			+\frac{\theta}{2}\frac{\epsilon}{\nu}\Vert {\bf u}-{\bf v}\Vert^2
			\nonumber
			\\
			&
			\geq \frac{2\theta+1}{4}\|{\bf u}\|^2 - \frac{2\theta -1}{4} \| {\bf v}\|^2 \text{,} \nonumber
		\end{align}
		and
		\begin{align}\label{G2}
			\left(\left[ \begin{matrix} {\bf u}\cr {\bf v} \end{matrix} \right], G\Big[ \begin{matrix} {\bf u}\cr {\bf v} \end{matrix} \Big]\right)
			\leq \frac{2\theta+1}{4}\|{\bf u}\|^2 +\frac{(\theta+1)(2\theta-1)}{{\color{black}4}}\Vert {\bf u}-{\bf v}\Vert^2 
			+\frac{\theta}{2}\frac{\epsilon}{\nu}\Vert {\bf u}-{\bf v}\Vert^2\\
			\leq   
			\left( \frac{2\theta+1}{4}+\frac{(\theta+1)(2\theta-1)}{2} +\frac{\theta\epsilon}{\nu}\right) \Vert {\bf u}\Vert^2
			+\left(\frac{(\theta+1)(2\theta-1)}{2} +\frac{\theta\epsilon}{\nu}\right)\Vert {\bf v}\Vert^2\text{.} \nonumber
		\end{align}
	\end{lemma}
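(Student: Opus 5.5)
The plan is a direct algebraic verification: expand the quadratic form defined by $G$, match coefficients against the claimed expression, and then obtain the two inequalities by dropping or bounding terms of known sign. Because every block of $G$ is a scalar multiple of $I$, the form reduces to a combination of $\|{\bf u}\|^2$, $({\bf u},{\bf v})$ and $\|{\bf v}\|^2$. Write $G=\begin{pmatrix} aI & -bI\\ -bI & cI\end{pmatrix}$, so that
\[
\Big(\Big[ \begin{matrix} {\bf u}\cr {\bf v} \end{matrix} \Big], G\Big[ \begin{matrix} {\bf u}\cr {\bf v} \end{matrix} \Big]\Big)=a\|{\bf u}\|^2-2b({\bf u},{\bf v})+c\|{\bf v}\|^2 .
\]
On the right-hand side of (\ref{G1}), set $k=\frac{(\theta+1)(2\theta-1)}{4}+\frac{\theta\epsilon}{2\nu}$ and expand $\|{\bf u}-{\bf v}\|^2=\|{\bf u}\|^2-2({\bf u},{\bf v})+\|{\bf v}\|^2$. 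The identity then reduces to three scalar equations:
\[
a=\tfrac{2\theta+1}{4}+k,\qquad b=k,\qquad c=\tfrac{1-2\theta}{4}+k .
\]

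I will check these by splitting each entry of $G$ into its $\epsilon$-free part and its $\epsilon/\nu$ part.

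\emph{Entry $a$.} The $\epsilon$-free part is $\theta(2\theta+3)/4=(2\theta^2+3\theta)/4$. This equals $(2\theta+1)/4+(\theta+1)(2\theta-1)/4$, since $(\theta+1)(2\theta-1)=2\theta^2+\theta-1$. The $\epsilon/\nu$ coefficient is $[\theta(2\theta+3)-\theta(2\theta+1)]/4=\theta/2$.

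\emph{Entry $b$.} The $\epsilon$-free part is exactly $(\theta+1)(2\theta-1)/4$. The $\epsilon/\nu$ coefficient is $[(2\theta^2+\theta-1)+(1-\theta)(2\theta+1)]/4=2\theta/4=\theta/2$.

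\emph{Entry $c$.} The $\epsilon$-free part is $(2\theta^2-\theta)/4=(1-2\theta)/4+(2\theta^2+\theta-1)/4$. The $\epsilon/\nu$ coefficient is $[\theta(2\theta-1)+\theta(3-2\theta)]/4=\theta/2$.

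In all three cases this agrees with the target, which proves the equality in (\ref{G1}). The only real work is this bookkeeping of the $\epsilon$-dependent entries, which is where I would expect any error to hide.

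For the inequality in (\ref{G1}), note that $\theta\in(\frac12,1]$, $\epsilon\ge0$ and $\nu>0$. Hence both coefficients $(\theta+1)(2\theta-1)/4$ and $\theta\epsilon/(2\nu)$ are nonnegative, and dropping the two $\|{\bf u}-{\bf v}\|^2$ terms gives the lower bound.

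For (\ref{G2}), I start again from the identity. The first inequality follows by dropping $\frac{1-2\theta}{4}\|{\bf v}\|^2$, which is nonpositive. The second follows from $\|{\bf u}-{\bf v}\|^2\le 2\|{\bf u}\|^2+2\|{\bf v}\|^2$, applied with the nonnegative coefficient $k$. This doubles $k$ and yields exactly the stated constants $\frac{(\theta+1)(2\theta-1)}{2}+\frac{\theta\epsilon}{\nu}$ on both $\|{\bf u}\|^2$ and $\|{\bf v}\|^2$.
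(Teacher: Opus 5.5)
Your proof is correct: the three coefficient identities check out (each $\epsilon/\nu$ coefficient is indeed $\theta/2$). The sign conditions $\theta\in(\frac12,1]$, $\epsilon\ge0$, $\nu>0$ are exactly what the two inequalities require. The paper gives no argument of its own and simply cites \cite{JiangMohebujjaman2016}, so your direct expansion is the standard verification. It uses only bilinearity and the fact that every block of $G$ is a scalar multiple of the identity, so it also covers the $L^2$ setting where the lemma is actually applied.
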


	\begin{theorem}\label{main-th}
		The method \eqref{CHNS1}--\eqref{CHNS5} is unconditionally long-time stable in the sense that for any $N\geq 2$
		\begin{align*}
			&
			\lambda \Vert \nabla \phi_{N}\Vert^2
			+\frac{\lambda\eta^2}{2} \Vert q_{N}\Vert^2
			+\Vert u_{N}\Vert^2
			\\
			&
			+\frac{\lambda}{2\theta+1} \sum_{n=1}^{N-1}\| \nabla(\phi_{n+1}- 2 \phi_n+\phi_{n-1})\|^2_F
			+\frac{\lambda\eta^2}{2(2\theta+1)}\sum_{n=1}^{N-1} \| q_{n+1}- 2 q_n+q_{n-1}\|^2_F
			\nonumber\\
			&
			+\frac{1}{2\theta+1} \sum_{n=1}^{N-1}\| u_{n+1}- 2 u_n+u_{n-1}\|^2_F
			+\frac{4 \Delta t\nu}{2\theta+1} \sum_{n=1}^{N-1}\Vert\nabla {\cal J}^{\epsilon}_{n+\theta}(u)\Vert^2
			+\frac{4\Delta t M }{2\theta+1}\sum_{n=1}^{N-1} \Vert \nabla {\cal H}_{n+\theta}(\mu)\Vert^2
			\nonumber\\
			&
			\leq \left(\frac{2\theta  -1 }{2\theta + 1}\right)^N \left( \lambda \Vert \nabla \phi_{0}\Vert^2
			+\frac{\lambda\eta^2}{2} \Vert q_{0}\Vert^2
			+\Vert u_{0}\Vert^2 \right)
			\nonumber\\
			&\qquad
			+2(1-\left(\frac{2\theta  -1 }{2\theta + 1}\right)^N) \left(\lambda {\left\| \left[ \begin{matrix} \nabla \phi_{1}\cr \nabla \phi_{0} \end{matrix} \right]\right\|} ^2_G
			+\frac{\lambda\eta^2}{2} {\left\| \left[ \begin{matrix} q_{1}\cr q_{0} \end{matrix} \right]\right\|} ^2_G
			+{\left\| \left[ \begin{matrix} u_{1}\cr u_{0} \end{matrix} \right]\right\|} ^2_G \right). \nonumber
		\end{align*}
	\end{theorem}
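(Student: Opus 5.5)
We follow the standard energy method for the $\theta$-family with curvature regularization, as in \cite{JiangMohebujjaman2016, JiangYang2024}. The test functions are chosen so that all nonlinear couplings cancel exactly. Take the $L^2$ inner product of \eqref{CHNS1} with $\Delta t\,{\cal H}_{n+\theta}(\mu)$, of \eqref{CHNS2} with $-\big((\theta+\tfrac12)\phi^{n+1}-2\theta\phi^n+(\theta-\tfrac12)\phi^{n-1}\big)$, of \eqref{CHNS2a} with $\tfrac{\lambda\eta^2}{2}\Delta t\,{\cal J}^{\epsilon}_{n+\theta}(q)$, and of \eqref{CHNS3} with $\Delta t\,{\cal J}^{\epsilon}_{n+\theta}(u)$. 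Then add the results.

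The cancellations are as follows. The time-derivative term of \eqref{CHNS1} cancels the left side of \eqref{CHNS2}. The term $\lambda({\cal H}(\phi){\cal J}(q),\delta\phi)$ cancels against the right side of the tested \eqref{CHNS2a}; this is precisely why $q_t=\tfrac{2}{\eta^2}\phi\phi_t$ is discretized with the same extrapolation ${\cal H}_{n+\theta}(\phi)$. Integrating by parts, the advection term $(\nabla\cdot({\cal H}(\phi){\cal J}(u)),{\cal H}(\mu))$ equals $-({\cal H}(\phi){\cal J}(u),\nabla{\cal H}(\mu))$, which cancels the capillary force in \eqref{CHNS3}. The convective term $({\cal H}(u)\cdot\nabla{\cal J}(u),{\cal J}(u))$ vanishes by skew-symmetry, using the discretely divergence-free extrapolation (or the usual skew-symmetrized form). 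The pressure term drops because $\nabla\cdot{\cal J}^\epsilon_{n+\theta}(u)=0$ by \eqref{CHNS4}. The dissipative terms produce $\Delta t\,M\|\nabla{\cal H}_{n+\theta}(\mu)\|^2+\Delta t\,\nu\|\nabla{\cal J}^\epsilon_{n+\theta}(u)\|^2$.

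All remaining terms have the form $\big((\theta+\tfrac12)a_{n+1}-2\theta a_n+(\theta-\tfrac12)a_{n-1},\,{\cal J}^\epsilon_{n+\theta}(a)\big)$ with $a=\nabla\phi$, $q$, or $u$, and with weights $\lambda$, $\lambda\eta^2/2$, $1$. The key algebraic identity, which is the core of \cite{JiangMohebujjaman2016} and is where the matrices $G$ and $F$ come from, is
\[
\Big((\theta+\tfrac12)a_{n+1}-2\theta a_n+(\theta-\tfrac12)a_{n-1},{\cal J}^\epsilon_{n+\theta}(a)\Big)=\Big\|\Big[\begin{matrix}a_{n+1}\cr a_n\end{matrix}\Big]\Big\|_G^2-\Big\|\Big[\begin{matrix}a_n\cr a_{n-1}\end{matrix}\Big]\Big\|_G^2+\tfrac14\|a_{n+1}-2a_n+a_{n-1}\|_F^2 .
\]
We verify it by expanding both sides as quadratic forms in $(a_{n+1},a_n,a_{n-1})$ and matching the six coefficients. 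This coefficient matching is the main bookkeeping obstacle, especially tracking the $\epsilon/\nu$ terms. Summing these identities yields the one-step law
\[
E^{n+1}_G-E^n_G+\tfrac14\sum\|\cdot\|_F^2+\Delta t(M\|\nabla{\cal H}\mu\|^2+\nu\|\nabla{\cal J}u\|^2)=0,
\]
where $E^{n+1}_G$ denotes the weighted sum of $G$-norms.

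Summing from $n=1$ to $N-1$ telescopes the $G$-norms, leaving $E^N_G$ on the left and $E^1_G$ on the right. The $G$-norm can be indefinite, so we next invoke the lower bound \eqref{G1} of Lemma \ref{lm:G-form}: $E^N_G\ge \tfrac{2\theta+1}{4}X_N-\tfrac{2\theta-1}{4}X_{N-1}$, where $X_n:=\lambda\|\nabla\phi_n\|^2+\tfrac{\lambda\eta^2}{2}\|q_n\|^2+\|u_n\|^2$. Multiplying by $4/(2\theta+1)$ gives
\[
X_N+\text{(dissipation sums)}\le rX_{N-1}+\tfrac{4}{2\theta+1}E^1_G,\qquad r=\tfrac{2\theta-1}{2\theta+1}<1 .
\]
This is valid for every $N$, since the dissipation sums are nonnegative because $F$ is positive definite. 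We then iterate this recursion for $X_N$ alone, keeping the dissipation sums only at the final level $N$. The contributions of $E^1_G$ form a geometric series $\sum_k r^k\le (1-r^N)/(1-r)$, and $\tfrac{4}{2\theta+1}\cdot\tfrac{1}{1-r}=2$. This produces the factor $2(1-r^N)$ and the term $r^NX_0$, which is exactly the stated bound. The only care needed here is that the recursion at intermediate levels drops the nonnegative dissipation, and that $\theta>1/2$ guarantees both $0\le r<1$ and the positivity of $F$.
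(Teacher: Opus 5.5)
Your proposal is correct and follows the paper's proof step for step. You use the same test functions and the same cancellations: the advection term against the capillary force, and ${\cal H}_{n+\theta}(\phi){\cal J}^{\epsilon}_{n+\theta}(q)$ against the tested \eqref{CHNS2a}. You also use the same $G$/$F$ telescoping identity, which the paper applies without displaying; you state it explicitly, and it checks out coefficient by coefficient. The argument then closes as the paper's does, with the lower bound \eqref{G1} and an induction down to $X_0$ whose base is the trivial $N=1$ case.

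One small precision: $\sum_{k=0}^{N-1}r^k=(1-r^N)/(1-r)$ holds with equality, so you need no sign condition on the $G$-norm of the starting data when you multiply through. In fact $G$ is positive definite for $\theta>\tfrac12$, since its $\epsilon$-free part has determinant $(2\theta-1)/16$ and its $\epsilon$-part is positive semidefinite.
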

	\begin{proof}
		Taking the inner product of \eqref{CHNS1} with ${\cal H}_{n+\theta}(\mu)$ and multiplying through by $\Delta t$ gives
		\begin{align}
			&\left((\theta+\frac{1}{2})\phi^{n+1}-2\theta \phi^n+(\theta - \frac{1}{2})\phi^{n-1}, {\cal H}_{n+\theta}(\mu)\right)
			-\Delta t\left({\cal H}_{n+\theta}(\phi){\cal J}^{\epsilon}_{n+\theta}(u), \nabla {\cal H}_{n+\theta}(\mu)\right)
			\nonumber\\
			&\qquad
			+\Delta t M \Vert \nabla {\cal H}_{n+\theta}(\mu)\Vert^2=0.\label{cpf1}
		\end{align}

		Taking the inner product of \eqref{CHNS2} with $(\theta+\frac{1}{2})\phi^{n+1}-2\theta \phi^n+(\theta - \frac{1}{2})\phi^{n-1}$ and integrating by parts gives
		\begin{align}\label{cpf2}
			&-\left((\theta+\frac{1}{2})\phi^{n+1}-2\theta \phi^n+(\theta - \frac{1}{2})\phi^{n-1},{\cal H}_{n+\theta}(\mu)\right)
			\\
			&+\lambda \left( {\left\| \left[ \begin{matrix} \nabla \phi_{n+1}\cr \nabla \phi_n \end{matrix} \right]\right\|} ^2_G - {\left\| \left[ \begin{matrix} \nabla \phi_{n}\cr \nabla \phi_{n-1} \end{matrix} \right]\right\|} ^2_G
			+\frac{1}{4} \| \nabla(\phi_{n+1}- 2 \phi_n+\phi_{n-1})\|^2_F \right)
			\nonumber \\
			&+\lambda \left( {\cal H}_{n+\theta}(\phi) {\cal J}^{\epsilon}_{n+\theta}(q), (\theta+\frac{1}{2})\phi^{n+1}-2\theta \phi^n+(\theta - \frac{1}{2})\phi^{n-1} \right) =0.\nonumber
		\end{align}

		Taking the inner product of \eqref{CHNS2a} with $\frac{\lambda\eta^2}{2}{\cal J}^{\epsilon}_{n+\theta}(q)$ and multiplying through by $\Delta t$ gives
		\begin{align}
			&
			\frac{\lambda\eta^2}{2} \left({\left\| \left[ \begin{matrix} q_{n+1}\cr q_n \end{matrix} \right]\right\|} ^2_G - {\left\| \left[ \begin{matrix} q_{n}\cr q_{n-1} \end{matrix} \right]\right\|} ^2_G
			+\frac{1}{4} \| q_{n+1}- 2 q_n+q_{n-1}\|^2_F \right)
			\nonumber\\
			&\qquad
			= \lambda \left( {\cal H}_{n+\theta}(\phi) \left((\theta+\frac{1}{2})\phi^{n+1}-2\theta \phi^n+(\theta - \frac{1}{2})\phi^{n-1}\right), {\cal J}^{\epsilon}_{n+\theta}(q)\right).\label{cpf2a}
		\end{align}

		Summing up \eqref{cpf1}, \eqref{cpf2} and \eqref{cpf2a} yields
		\begin{align}\label{cpf20}
			&-\Delta t\left({\cal H}_{n+\theta}(\phi){\cal J}^{\epsilon}_{n+\theta}(u), \nabla {\cal H}_{n+\theta}(\mu)\right)
			+\Delta t M \Vert \nabla {\cal H}_{n+\theta}(\mu)\Vert^2 \nonumber \\
			&+\lambda \left( {\left\| \left[ \begin{matrix} \nabla \phi_{n+1}\cr \nabla \phi_n \end{matrix} \right]\right\|} ^2_G - {\left\| \left[ \begin{matrix} \nabla \phi_{n}\cr \nabla \phi_{n-1} \end{matrix} \right]\right\|} ^2_G
			+\frac{1}{4} \| \nabla(\phi_{n+1}- 2 \phi_n+\phi_{n-1})\|^2_F \right) \\
			&+\frac{\lambda\eta^2}{2} \left({\left\| \left[ \begin{matrix} q_{n+1}\cr q_n \end{matrix} \right]\right\|} ^2_G - {\left\| \left[ \begin{matrix} q_{n}\cr q_{n-1} \end{matrix} \right]\right\|} ^2_G
			+\frac{1}{4} \| q_{n+1}- 2 q_n+q_{n-1}\|^2_F \right)=0. \nonumber
		\end{align}

		Taking the inner product of \eqref{CHNS3} with $\Delta t {\cal J}^{\epsilon}_{n+\theta}(u)$ gives
		\begin{align}
			&{\left\| \left[ \begin{matrix} u_{n+1}\cr u_n \end{matrix} \right]\right\|} ^2_G - {\left\| \left[ \begin{matrix} u_{n}\cr u_{n-1} \end{matrix} \right]\right\|} ^2_G
			+\frac{1}{4} \| u_{n+1}- 2 u_n+u_{n-1}\|^2_F \label{cpf3}\\
			&+\Delta t\nu\Vert\nabla {\cal J}^{\epsilon}_{n+\theta}(u)\Vert^2
			+\Delta t\left( {\cal H}_{n+\theta}(\phi)\nabla{\cal H}_{n+\theta}(\mu), {\cal J}^{\epsilon}_{n+\theta}(u)\right)=0.
		\end{align}

		The convection and pressure terms vanish by incompressibility and the homogeneous velocity boundary condition. Adding \eqref{cpf20} and \eqref{cpf3} yields
		\begin{align}\label{cpf6}
			&
			\Delta t M \Vert \nabla {\cal H}_{n+\theta}(\mu)\Vert^2
			+\lambda \left( {\left\| \left[ \begin{matrix} \nabla \phi_{n+1}\cr \nabla \phi_n \end{matrix} \right]\right\|} ^2_G - {\left\| \left[ \begin{matrix} \nabla \phi_{n}\cr \nabla \phi_{n-1} \end{matrix} \right]\right\|} ^2_G
			+\frac{1}{4} \| \nabla(\phi_{n+1}- 2 \phi_n+\phi_{n-1})\|^2_F \right) \\
			&+\frac{\lambda\eta^2}{2} \left({\left\| \left[ \begin{matrix} q_{n+1}\cr q_n \end{matrix} \right]\right\|} ^2_G - {\left\| \left[ \begin{matrix} q_{n}\cr q_{n-1} \end{matrix} \right]\right\|} ^2_G
			+\frac{1}{4} \| q_{n+1}- 2 q_n+q_{n-1}\|^2_F \right)
			\nonumber\\
			&
			+{\left\| \left[ \begin{matrix} u_{n+1}\cr u_n \end{matrix} \right]\right\|} ^2_G - {\left\| \left[ \begin{matrix} u_{n}\cr u_{n-1} \end{matrix} \right]\right\|} ^2_G
			+\frac{1}{4} \| u_{n+1}- 2 u_n+u_{n-1}\|^2_F
			+\Delta t\nu\Vert\nabla {\cal J}^{\epsilon}_{n+\theta}(u)\Vert^2
			=0. \nonumber
		\end{align}

		Taking the sum from $n=1$ to $n=N-1$ yields
		\begin{align}\label{cpf7}
			&
			\lambda {\left\| \left[ \begin{matrix} \nabla \phi_{N}\cr \nabla \phi_{N-1} \end{matrix} \right]\right\|} ^2_G
			+\frac{\lambda\eta^2}{2} {\left\| \left[ \begin{matrix} q_{N}\cr q_{N-1} \end{matrix} \right]\right\|} ^2_G
			+{\left\| \left[ \begin{matrix} u_{N}\cr u_{N-1} \end{matrix} \right]\right\|} ^2_G
			\\
			&
			+\frac{\lambda}{4} \sum_{n=1}^{N-1}\| \nabla(\phi_{n+1}- 2 \phi_n+\phi_{n-1})\|^2_F
			+\frac{\lambda\eta^2}{8}\sum_{n=1}^{N-1} \| q_{n+1}- 2 q_n+q_{n-1}\|^2_F
			\nonumber\\
			&
			+\frac{1}{4} \sum_{n=1}^{N-1}\| u_{n+1}- 2 u_n+u_{n-1}\|^2_F
			+\Delta t\nu \sum_{n=1}^{N-1}\Vert\nabla {\cal J}^{\epsilon}_{n+\theta}(u)\Vert^2
			+\Delta t M \sum_{n=1}^{N-1} \Vert \nabla {\cal H}_{n+\theta}(\mu)\Vert^2
			\nonumber\\
			&
			= \lambda {\left\| \left[ \begin{matrix} \nabla \phi_{1}\cr \nabla \phi_{0} \end{matrix} \right]\right\|} ^2_G
			+\frac{\lambda\eta^2}{2} {\left\| \left[ \begin{matrix} q_{1}\cr q_{0} \end{matrix} \right]\right\|} ^2_G
			+ {\left\| \left[ \begin{matrix} u_{1}\cr u_{0} \end{matrix} \right]\right\|} ^2_G . \nonumber
		\end{align}

		By Lemma \ref{lm:G-form}, we have
		\begin{align}\label{cpf8}
			&
			\lambda \Vert \nabla \phi_{N}\Vert^2
			+\frac{\lambda\eta^2}{2} \Vert q_{N}\Vert^2
			+\Vert u_{N}\Vert^2
			\\
			&
			+\frac{\lambda}{2\theta+1} \sum_{n=1}^{N-1}\| \nabla(\phi_{n+1}- 2 \phi_n+\phi_{n-1})\|^2_F
			+\frac{\lambda\eta^2}{2(2\theta+1)}\sum_{n=1}^{N-1} \| q_{n+1}- 2 q_n+q_{n-1}\|^2_F
			\nonumber\\
			&
			+\frac{1}{2\theta+1} \sum_{n=1}^{N-1}\| u_{n+1}- 2 u_n+u_{n-1}\|^2_F
			+\frac{4 \Delta t\nu}{2\theta+1} \sum_{n=1}^{N-1}\Vert\nabla {\cal J}^{\epsilon}_{n+\theta}(u)\Vert^2
			+\frac{4\Delta t M }{2\theta+1}\sum_{n=1}^{N-1} \Vert \nabla {\cal H}_{n+\theta}(\mu)\Vert^2
			\nonumber\\
			&
			\leq \lambda \frac{2\theta  -1 }{2\theta + 1} \Vert \nabla \phi_{N-1}\Vert^2
			+\frac{\lambda\eta^2}{2} \frac{2\theta  -1 }{2\theta + 1} \Vert q_{N-1}\Vert^2
			+\frac{2\theta  -1 }{2\theta + 1}\Vert u_{N-1}\Vert^2
			\nonumber\\
			&\qquad
			+\lambda \frac{4}{2\theta+1} {\left\| \left[ \begin{matrix} \nabla \phi_{1}\cr \nabla \phi_{0} \end{matrix} \right]\right\|} ^2_G
			+\frac{\lambda\eta^2}{2} \frac{4}{2\theta+1} {\left\| \left[ \begin{matrix} q_{1}\cr q_{0} \end{matrix} \right]\right\|} ^2_G
			+ \frac{4}{2\theta+1} {\left\| \left[ \begin{matrix} u_{1}\cr u_{0} \end{matrix} \right]\right\|} ^2_G . \nonumber
		\end{align}

		Applying the same lower bound in Lemma \ref{lm:G-form} to the initial pair gives the corresponding base estimate for $N=1$. By induction, we obtain the following energy estimate
		\begin{align}
			&
			\lambda \Vert \nabla \phi_{N}\Vert^2
			+\frac{\lambda\eta^2}{2} \Vert q_{N}\Vert^2
			+\Vert u_{N}\Vert^2
			\\
			&
			+\frac{\lambda}{2\theta+1} \sum_{n=1}^{N-1}\| \nabla(\phi_{n+1}- 2 \phi_n+\phi_{n-1})\|^2_F
			+\frac{\lambda\eta^2}{2(2\theta+1)}\sum_{n=1}^{N-1} \| q_{n+1}- 2 q_n+q_{n-1}\|^2_F
			\nonumber\\
			&
			+\frac{1}{2\theta+1} \sum_{n=1}^{N-1}\| u_{n+1}- 2 u_n+u_{n-1}\|^2_F
			+\frac{4 \Delta t\nu}{2\theta+1} \sum_{n=1}^{N-1}\Vert\nabla {\cal J}^{\epsilon}_{n+\theta}(u)\Vert^2
			+\frac{4\Delta t M }{2\theta+1}\sum_{n=1}^{N-1} \Vert \nabla {\cal H}_{n+\theta}(\mu)\Vert^2
			\nonumber\\
			&
			\leq \left(\frac{2\theta  -1 }{2\theta + 1}\right)^N \left( \lambda \Vert \nabla \phi_{0}\Vert^2
			+\frac{\lambda\eta^2}{2} \Vert q_{0}\Vert^2
			+\Vert u_{0}\Vert^2 \right)
			\nonumber\\
			&\qquad
			+2(1-\left(\frac{2\theta  -1 }{2\theta + 1}\right)^N) \left(\lambda {\left\| \left[ \begin{matrix} \nabla \phi_{1}\cr \nabla \phi_{0} \end{matrix} \right]\right\|} ^2_G
			+\frac{\lambda\eta^2}{2} {\left\| \left[ \begin{matrix} q_{1}\cr q_{0} \end{matrix} \right]\right\|} ^2_G
			+{\left\| \left[ \begin{matrix} u_{1}\cr u_{0} \end{matrix} \right]\right\|} ^2_G \right). \nonumber
		\end{align}
		This completes the proof.
	\end{proof}

\section{Numerical experiments}\label{nume}

We present a series of benchmark computations to examine the accuracy, robustness, conservation properties, and practical performance of the proposed algorithm. Several of the tests are adapted from commonly used benchmarks in the literature \cite{Han2015, Gao2025, Lee2012}. Unless otherwise stated, the spatial discretization uses the $P_2-P_1$ Taylor--Hood pair for $(\mathbf{u},p)$ and $P_2-P_2$ elements for $(\phi,\mathcal{H}(\mu))$.

\subsection{Convergence Analysis: Temporal Accuracy}

To examine the convergence behavior of the scheme, we employ the Method of Manufactured Solutions (MMS) on the domain $\Omega=[0,1]^2$. Artificial forcing terms are added to the continuous Cahn--Hilliard--Navier--Stokes equations so that the following functions constitute an exact analytical solution:
\begin{align}
    u_{ex}(x,y,t) &=
    \begin{pmatrix}
    \pi \sin^2(\pi x)\sin(2\pi y)\cos(t) \\
    -\pi\sin(2\pi x)\sin^2(\pi y)\cos(t)
    \end{pmatrix},
    \label{eq:mms_u}\\
    p_{ex}(x,y,t) &= \cos(\pi x)\cos(\pi y)\cos(t),
    \label{eq:mms_p}\\
    \phi_{ex}(x,y,t) &= 0.1\cos(t)\cos(\pi x)\cos(\pi y).
    \label{eq:mms_phi}
\end{align}
Initial data and boundary data consistent with these exact profiles are imposed. Across all convergence tests, the simulations are advanced to $T=1.0$, with
\[
\theta=0.8,\qquad
\nu=1.0,\qquad
\lambda=0.1,\qquad
\eta=1.0,\qquad
M=1.0,\qquad
\epsilon=10^{-5}.
\]
We consider two mixed finite-element configurations: $P_2-P_1-P_2-P_2$ (degree $k=2$) and the higher-order $P_3-P_2-P_3-P_3$ configuration (degree $k=3$).

To examine the temporal accuracy of the scheme, we fix a highly refined spatial mesh with $N_x=128$ and successively halve the time step from $\Delta t=0.1$ to $\Delta t=0.0125$. Tables \ref{tab:temp_conv_deg2} and \ref{tab:temp_conv_deg3} report the discrete $L^2$ errors for the velocity, pressure, and phase field. For the current computations, the observed rates are close to second order for all three variables.

\begin{table}[h!]
\centering
\caption{Temporal convergence at $T=1.0$ utilizing $P_2-P_1-P_2-P_2$ elements (fixed $N_x=128$).}
\label{tab:temp_conv_deg2}
\begin{tabular}{ccccccc}
\toprule
$\Delta t$ & $\|u-u_{ex}\|_{L^2}$ & Rate & $\|p-p_{ex}\|_{L^2}$ & Rate & $\|\phi-\phi_{ex}\|_{L^2}$ & Rate \\
\hline
$1.00 \times 10^{-1}$ & $9.16 \times 10^{-4}$ & --   & $1.41 \times 10^{-2}$ & --   & $1.46 \times 10^{-3}$ & --   \\
$5.00 \times 10^{-2}$ & $2.22 \times 10^{-4}$ & 2.04 & $3.60 \times 10^{-3}$ & 1.97 & $3.48 \times 10^{-4}$ & 2.07 \\
$2.50 \times 10^{-2}$ & $5.49 \times 10^{-5}$ & 2.02 & $9.08 \times 10^{-4}$ & 1.99 & $8.51 \times 10^{-5}$ & 2.03 \\
$1.25 \times 10^{-2}$ & $1.37 \times 10^{-5}$ & 2.00 & $2.33 \times 10^{-4}$ & 1.96 & $2.10 \times 10^{-5}$ & 2.01 \\
\bottomrule
\end{tabular}
\end{table}

\begin{table}[h!]
\centering
\caption{Temporal convergence at $T=1.0$ utilizing $P_3-P_2-P_3-P_3$ elements (fixed $N_x=128$).}
\label{tab:temp_conv_deg3}
\begin{tabular}{ccccccc}
\toprule
$\Delta t$ & $\|u-u_{ex}\|_{L^2}$ & Rate & $\|p-p_{ex}\|_{L^2}$ & Rate & $\|\phi-\phi_{ex}\|_{L^2}$ & Rate \\
\hline
$1.00 \times 10^{-1}$ & $9.16 \times 10^{-4}$ & --   & $1.41 \times 10^{-2}$ & --   & $1.46 \times 10^{-3}$ & --   \\
$5.00 \times 10^{-2}$ & $2.22 \times 10^{-4}$ & 2.04 & $3.60 \times 10^{-3}$ & 1.97 & $3.48 \times 10^{-4}$ & 2.07 \\
$2.50 \times 10^{-2}$ & $5.49 \times 10^{-5}$ & 2.02 & $9.06 \times 10^{-4}$ & 1.99 & $8.51 \times 10^{-5}$ & 2.03 \\
$1.25 \times 10^{-2}$ & $1.36 \times 10^{-5}$ & 2.01 & $2.27 \times 10^{-4}$ & 1.99 & $2.11 \times 10^{-5}$ & 2.01 \\
\bottomrule
\end{tabular}
\end{table}

\begin{figure}[h!]
    \centering
    \begin{minipage}{0.48\textwidth}
        \centering
        \includegraphics[width=\linewidth]{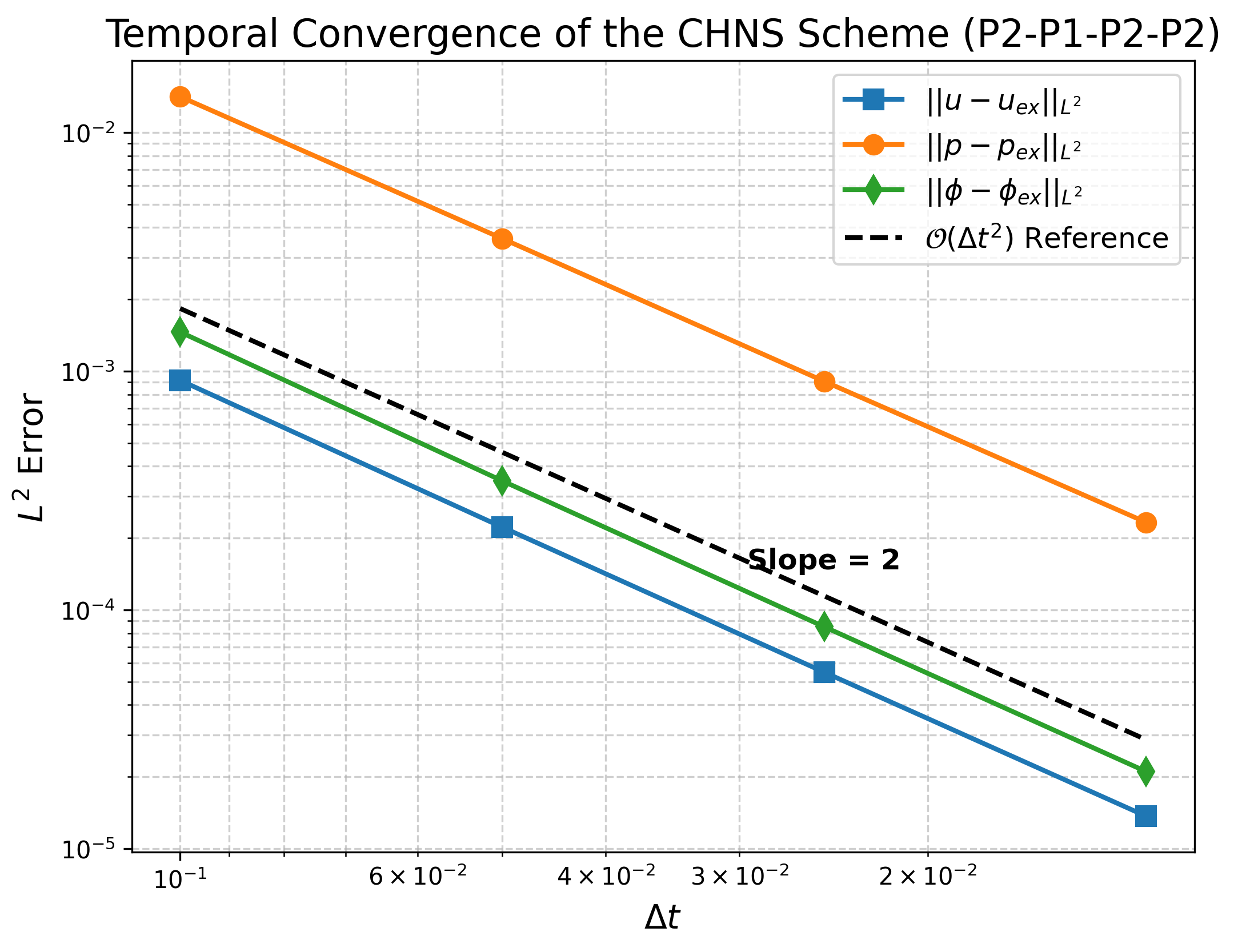}
        \caption{Temporal error decay ($P_2$-based) consistent with approximately second-order temporal accuracy over the tested range.}
        \label{fig:temp_conv_deg2}
    \end{minipage}\hfill
    \begin{minipage}{0.48\textwidth}
        \centering
        \includegraphics[width=\linewidth]{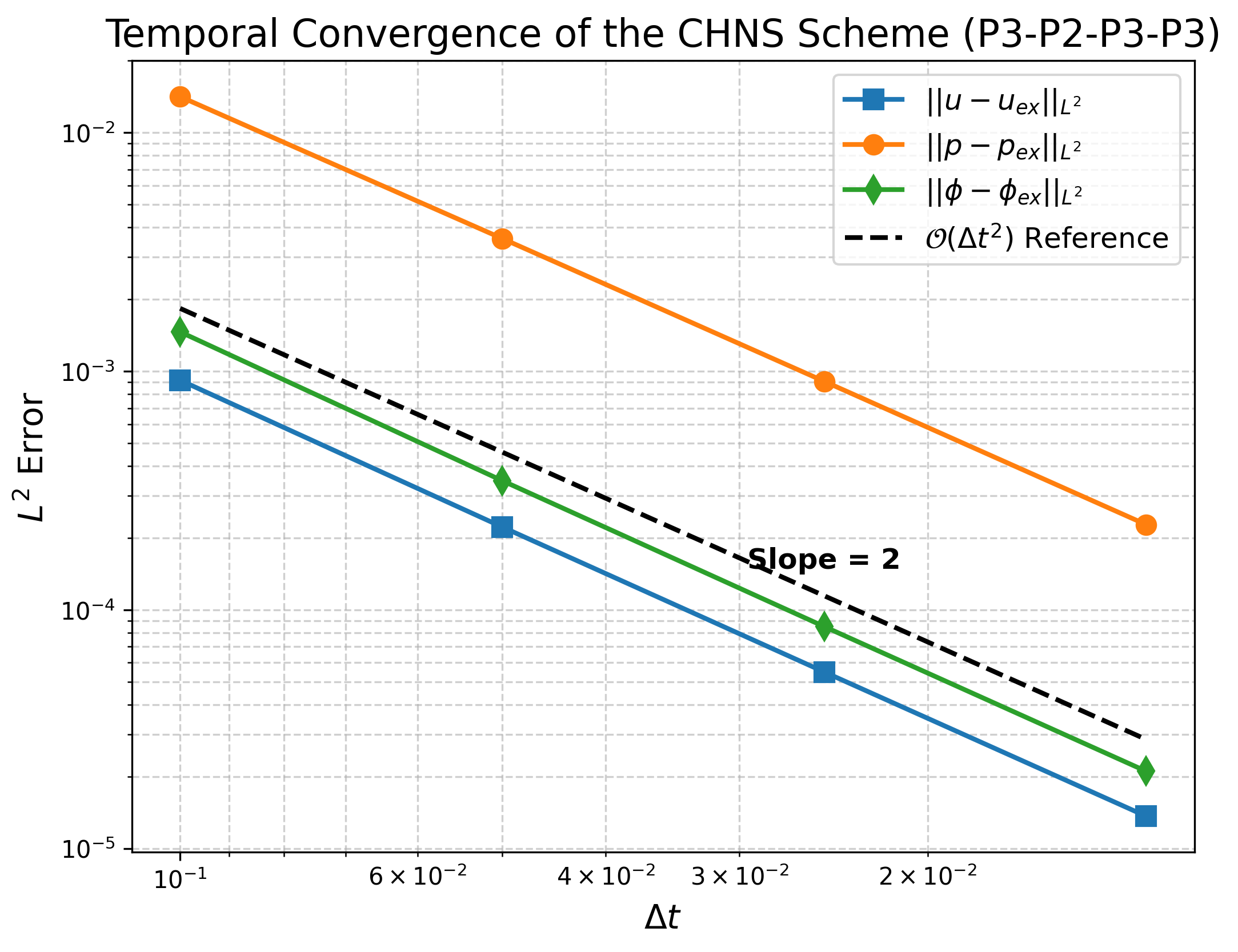}
        \caption{Temporal error decay ($P_3$-based) consistent with approximately second-order temporal accuracy over the tested range.}
        \label{fig:temp_conv_deg3}
    \end{minipage}
    
    \label{fig:temp_conv}
\end{figure}

\subsection{Robustness of the Scheme}

\subsubsection{Temporal Cauchy Convergence Test}

To further examine the temporal behavior of the scheme in a nonlinear flow configuration, we perform a temporal Cauchy convergence study for the two-phase lid-driven cavity problem. The computational domain $\Omega=[0,1]^2$ is discretized using a fixed $128\times128$ uniform mesh. We set the viscosity to $\nu=0.01$, the maximum lid velocity to $U=1.0$, and compute solutions with the sequence of halved time steps
\[
\Delta t\in
\left\{
2.5\times10^{-3},
1.25\times10^{-3},
6.25\times10^{-4},
3.125\times10^{-4}
\right\}.
\]

Because the three-level method requires two starting solution levels, a first-order semi-implicit backward-Euler step is used only to generate the first numerical solution. Although backward Euler is first-order accurate as a time-stepping method, a single starting step produces an $\mathcal{O}(\Delta t^2)$ one-step solution error under sufficient regularity, which is compatible with initialization of a globally second-order multistep method.

For the principal parameter set
\[
\theta\in\{0.51,0.75,1.0\},
\qquad
\epsilon\in\{0,0.5\nu,\nu,2\nu\},
\]
the measured interior Cauchy rates are close to second order for most configurations and for the velocity, phase, and pressure variables. The detailed tables are omitted here for brevity. Related second-order CHNS time discretizations can be found in \cite{HanWang2017, LiShen2022_MSAV}; the purpose of the present Cauchy study is specifically to verify that the implementation of the proposed scheme retains its expected temporal behavior in a strongly coupled nonlinear benchmark.

The observed rates indicate that, for moderate values of the regularization parameter, the curvature regularization and extrapolated treatment of the nonlinear terms do not introduce an evident first-order temporal contamination over the tested range.

Additional exploratory computations were performed with substantially larger regularization parameters, including $\epsilon=5\nu$ and $\epsilon=10\nu$. In these over-regularized cases, some apparent Cauchy rates become highly irregular and can exceed $3$ or even $10$ during the finest refinements. We do not interpret these values as genuine high-order convergence. A plausible explanation is that strong regularization suppresses temporal variations to the point that successive Cauchy differences become extremely small. The rate
\[
r_{\Delta t}
=
\log_2
\left(
\frac{E_{\Delta t}}{E_{\Delta t/2}}
\right)
\]
then becomes sensitive to cancellation, round-off effects, and pre-asymptotic behavior whenever the denominator is anomalously small. The irregular rates observed for very large $\epsilon$ should therefore be viewed as evidence that these runs have left the useful asymptotic regime rather than as an increase in the formal order of the method.

\subsubsection{Dependence of Performance on $\epsilon$ and $\theta$}

To investigate the practical effect of the temporal-curvature regularization parameter $\epsilon$ and the time-stepping parameter $\theta$, we examine the energy and enstrophy dynamics of the two-phase lid-driven cavity problem over a range of numerical configurations. Computations were performed for
\[
\nu\in\left\{\frac{1}{250},\frac{1}{500},\frac{1}{1000}\right\}.
\]
The principal qualitative behavior was similar across these cases, and we therefore present the representative case $\nu=1/500$.

For the energy diagnostic, we compute the physical CHNS energy directly from the numerical phase and velocity fields,
\[
E_{\mathrm{phys}}(t)
=
\int_\Omega
\left[
\frac12|\mathbf{u}|^2
+
\frac{\lambda}{2}|\nabla\phi|^2
+
\frac{\lambda}{4\eta^2}(\phi^2-1)^2
\right]dx.
\]
The enstrophy diagnostic is computed from the squared $L^2$ magnitude of the two-dimensional vorticity field.

\begin{figure}[htpb]
    \centering
    \includegraphics[width=\textwidth]{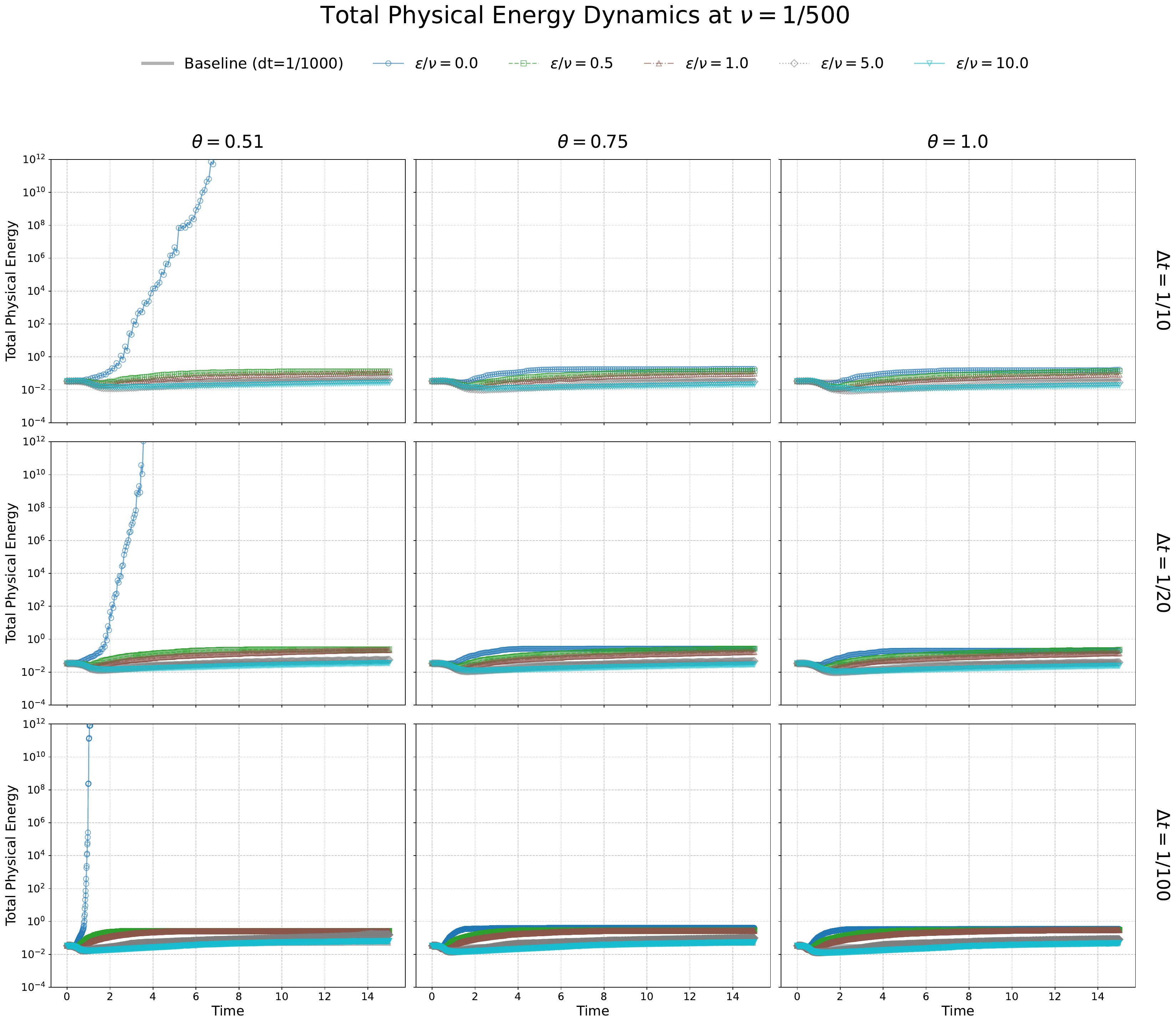}
    \caption{Evolution of the total physical energy for the two-phase lid-driven cavity flow at $\nu=1/500$ for different values of $\theta$, $\epsilon$, and $\Delta t$.}
    \label{fig:robustness_energy_nu500}
\end{figure}

\begin{figure}[htpb]
    \centering
    \includegraphics[width=\textwidth]{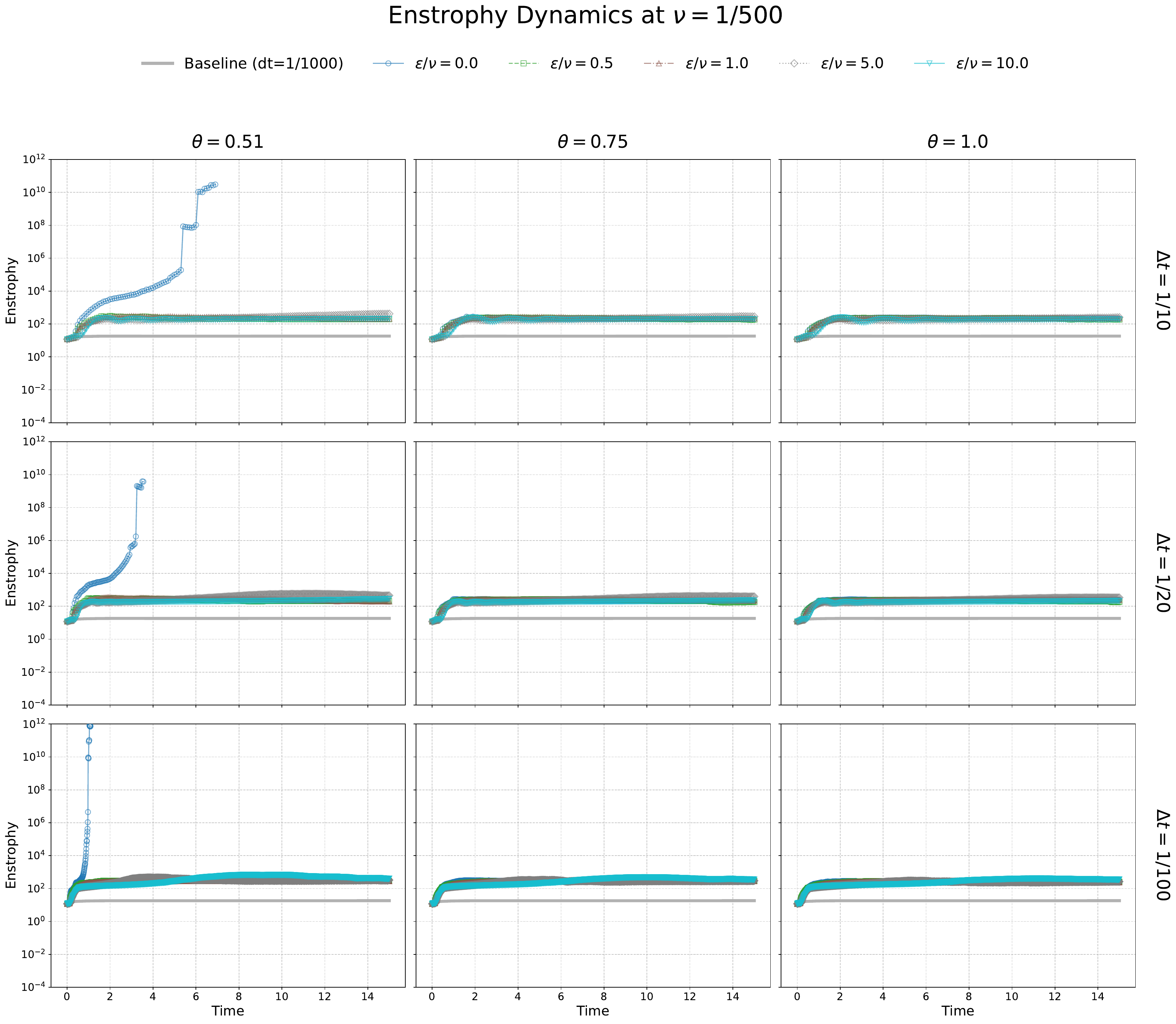}
    \caption{Evolution of the enstrophy for the two-phase lid-driven cavity flow at $\nu=1/500$. The regularized calculations limit the rapid enstrophy growth observed in the unstable unregularized runs.}
    \label{fig:robustness_enstrophy_nu500}
\end{figure}

Figures \ref{fig:robustness_energy_nu500} and \ref{fig:robustness_enstrophy_nu500} show the evolution of physical energy and enstrophy for
\[
\theta\in\{0.51,0.75,1.0\},
\qquad
\Delta t\in\left\{\frac1{10},\frac1{20},\frac1{100}\right\},
\]
with different values of $\epsilon$. These experiments are motivated by the curvature-regularization framework in \cite{JiangMohebujjaman2016, JiangYang2024}.

The unconditional energy-stability result established in Theorem \ref{main-th} is derived under homogeneous velocity boundary conditions,
\[
\mathbf{u}|_{\partial\Omega}=0.
\]
The lid-driven cavity problem does not satisfy this assumption because the moving upper boundary continuously performs work on the fluid. Consequently, the physical energy of this driven problem is not expected to decrease monotonically. Instead, a stable computation should remain bounded and approach a dynamically balanced regime in which boundary energy input is compensated by viscous and phase-field dissipation.

The most sensitive configuration occurs for $\theta=0.51$ and $\epsilon=0$, corresponding to a weakly dissipative time discretization close to the Crank--Nicolson limit. For this case, the computations eventually lose boundedness for all three displayed time-step sizes. A notable and initially counterintuitive observation is that the instability occurs earlier in physical time as $\Delta t$ is reduced. Thus, in this parameter regime, simply refining the time step does not restore robustness.

One possible interpretation of this behavior is associated with a weakly damped parasitic temporal component of the unregularized two-step discretization near the Crank--Nicolson limit. When the intrinsic numerical damping is very small, oscillatory components generated by the strongly coupled interfacial dynamics and by the continuous energy input from the moving lid can persist over many consecutive updates. Reducing $\Delta t$ increases the number of such updates over a fixed physical interval and, in the absence of additional regularization, may allow this weakly damped component to accumulate rather than decay. This provides a plausible explanation for the observed earlier loss of boundedness at smaller $\Delta t$. We emphasize that this interpretation is based on the observed numerical behavior and should not be regarded as a rigorous instability mechanism established by the analysis in Section \ref{uncon_stab}.

The corresponding enstrophy curves exhibit rapid growth when these unregularized computations become unstable. The growth is consistent with the development of increasingly strong small-scale velocity gradients as the numerical solution loses robustness.

Introducing positive curvature regularization substantially changes this behavior. For moderate values such as
\[
\epsilon=0.5\nu
\qquad\text{and}\qquad
\epsilon=\nu,
\]
the energy and enstrophy remain bounded over the tested interval, including for the relatively large time steps considered here. The results suggest that the regularization effectively damps the temporal oscillatory components responsible for the loss of robustness in the unregularized $\theta=0.51$ calculations.

Increasing $\theta$ also increases the dissipative character of the time discretization. In particular, $\theta=1$ corresponds to the BDF2 limit of the underlying three-level derivative and produces substantially more robust behavior even when $\epsilon=0$. However, increasing $\theta$ also changes the numerical dissipation of the base time integrator. The two-parameter formulation therefore provides additional flexibility: $\theta$ controls the character of the underlying second-order method, while $\epsilon$ supplies a separate curvature-regularization mechanism. The numerical results indicate that moderate positive values of $\epsilon$ can improve robustness when $\theta$ is chosen close to the weakly dissipative Crank--Nicolson limit.

\subsection{Spinodal Decomposition, Energy Dissipation, and Mass Conservation}

To examine long-time behavior, energy dissipation, and phase-mass conservation, we simulate spinodal decomposition of a binary fluid mixture. This classical benchmark describes the spontaneous separation of a nearly homogeneous, thermodynamically unstable mixture into distinct phase domains.

The computation is performed on the unit square,
\[
\Omega=[0,1]^2,
\]
using a $256\times256$ uniform triangular mesh. The initial phase field is prescribed as
\[
\phi_0(\mathbf{x})
=
0.2-0.01\,r(\mathbf{x}),
\]
where $r(\mathbf{x})$ is a realization of uniformly distributed random noise on $[-1,1]$. The initial velocity and pressure are set to zero, and no-slip boundary conditions are imposed on all walls. The parameters are
\[
\nu=0.1,\qquad
\eta=0.02,\qquad
\lambda=0.001,\qquad
M=0.01,\qquad
\epsilon=10^{-5}.
\]

\begin{figure}[h!]
    \centering
    \includegraphics[width=0.95\textwidth]{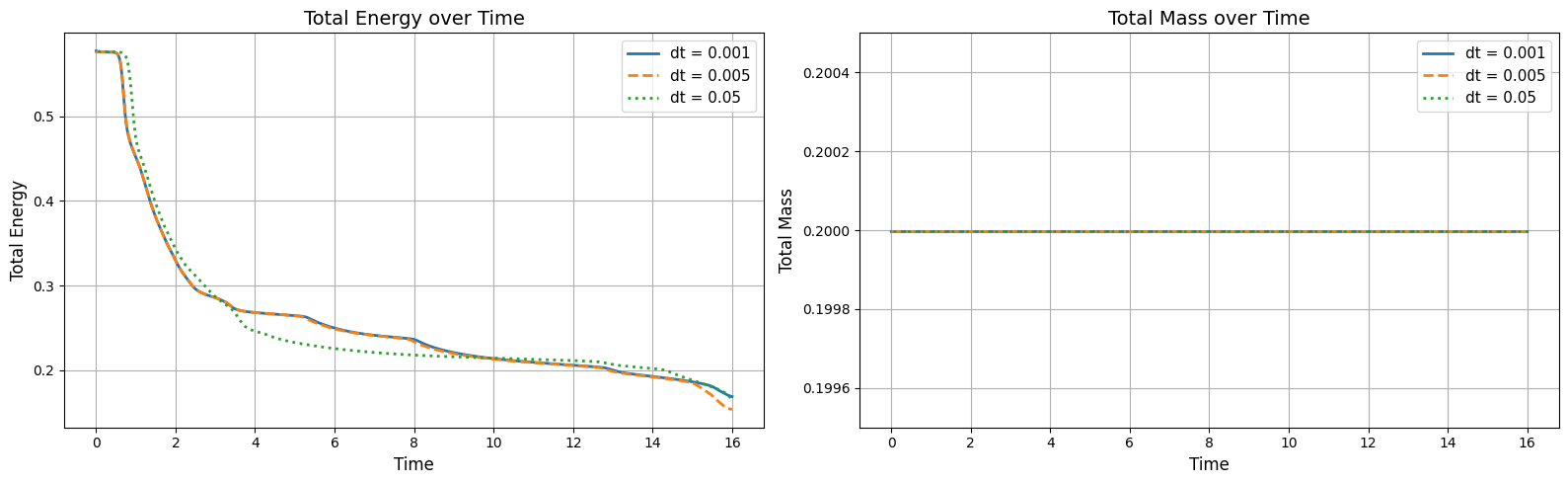}
    \caption{Comparison of the computed energy (left) and total phase mass (right) for $\Delta t=0.05$, $0.005$, and $0.001$. The mass remains constant at its numerically realized initial value, while the energy curves for $\Delta t=0.005$ and $\Delta t=0.001$ are nearly indistinguishable on the plotted scale.}
    \label{fig:spinodal_dt_comparison}
\end{figure}

To select a time step for the long-time simulation, we first compare the evolution of the computed energy and total phase mass for
\[
\Delta t=0.05,\qquad
0.005,\qquad
0.001.
\]
As shown in Figure \ref{fig:spinodal_dt_comparison}, the total phase mass remains constant, to numerical precision, at its initial value, which is approximately $0.20$ for the random realization used here. The computed energy decreases monotonically for each of the three time steps, consistent with the expected dissipative behavior of the scheme.

The energy curves for $\Delta t=0.005$ and $\Delta t=0.001$ are visually almost indistinguishable on the scale of the figure. This indicates that the energy evolution is effectively time-step converged at the plotted resolution. We therefore use
\[
\Delta t=0.005
\]
for the long-time calculation up to
\[
T=100.
\]

Figures \ref{fig:spinodal_energy_mass_part1} and \ref{fig:spinodal_energy_mass_part2} show the evolution of the phase field together with the corresponding computed energy and phase mass. During the early stage, approximately $0\leq t\leq20$, the initially mixed state rapidly separates and develops an interconnected network of phase domains. At later times, the system enters a coarsening regime in which highly curved structures relax, thin connections break, and smaller domains merge into larger ones. This evolution is accompanied by continued energy dissipation while the phase mass remains constant to numerical precision over the full interval $0\leq t\leq100$.

\begin{figure}[htbp]
    \centering

    \begin{minipage}{0.31\textwidth}
        \centering
        \includegraphics[width=\linewidth]{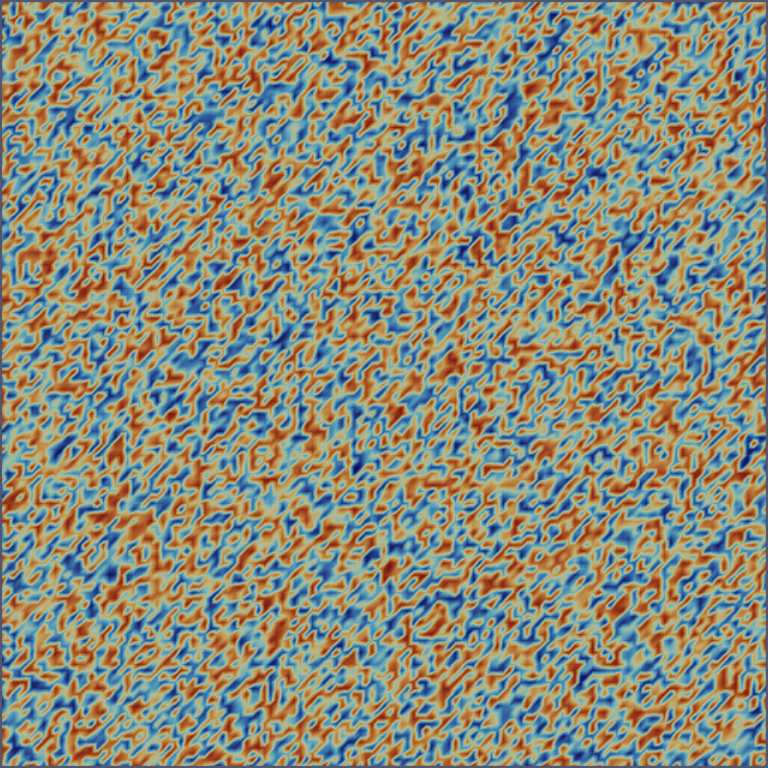}
        \centerline{(a) Phase field at $t=0$}
    \end{minipage}\hfill
    \begin{minipage}{0.60\textwidth}
        \centering
        \includegraphics[width=\linewidth]{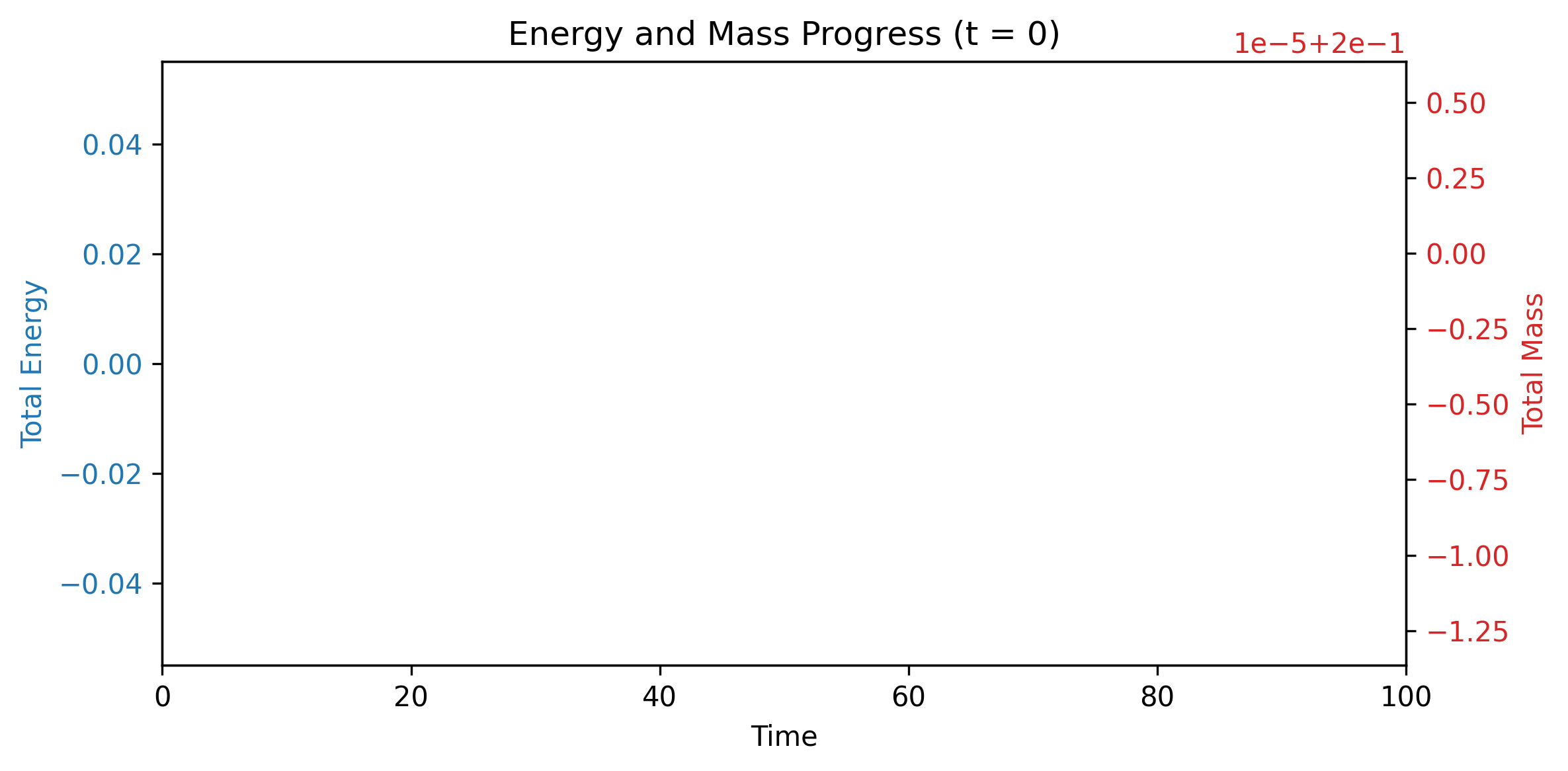}
        \centerline{(b) Energy and mass up to $t=0$}
    \end{minipage}

    \vspace{0.3cm}

    \begin{minipage}{0.31\textwidth}
        \centering
        \includegraphics[width=\linewidth]{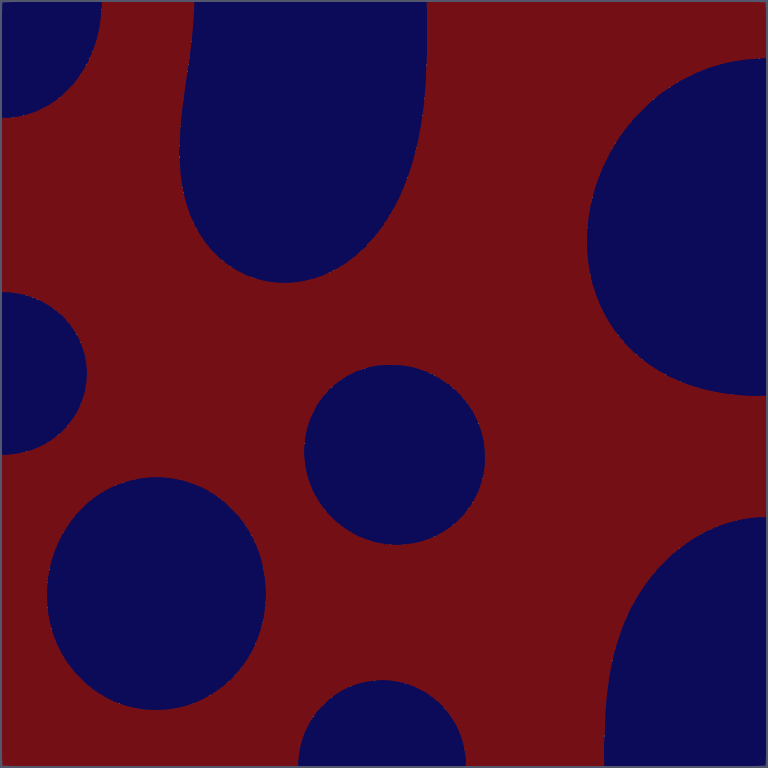}
        \centerline{(c) Phase field at $t=10$}
    \end{minipage}\hfill
    \begin{minipage}{0.60\textwidth}
        \centering
        \includegraphics[width=\linewidth]{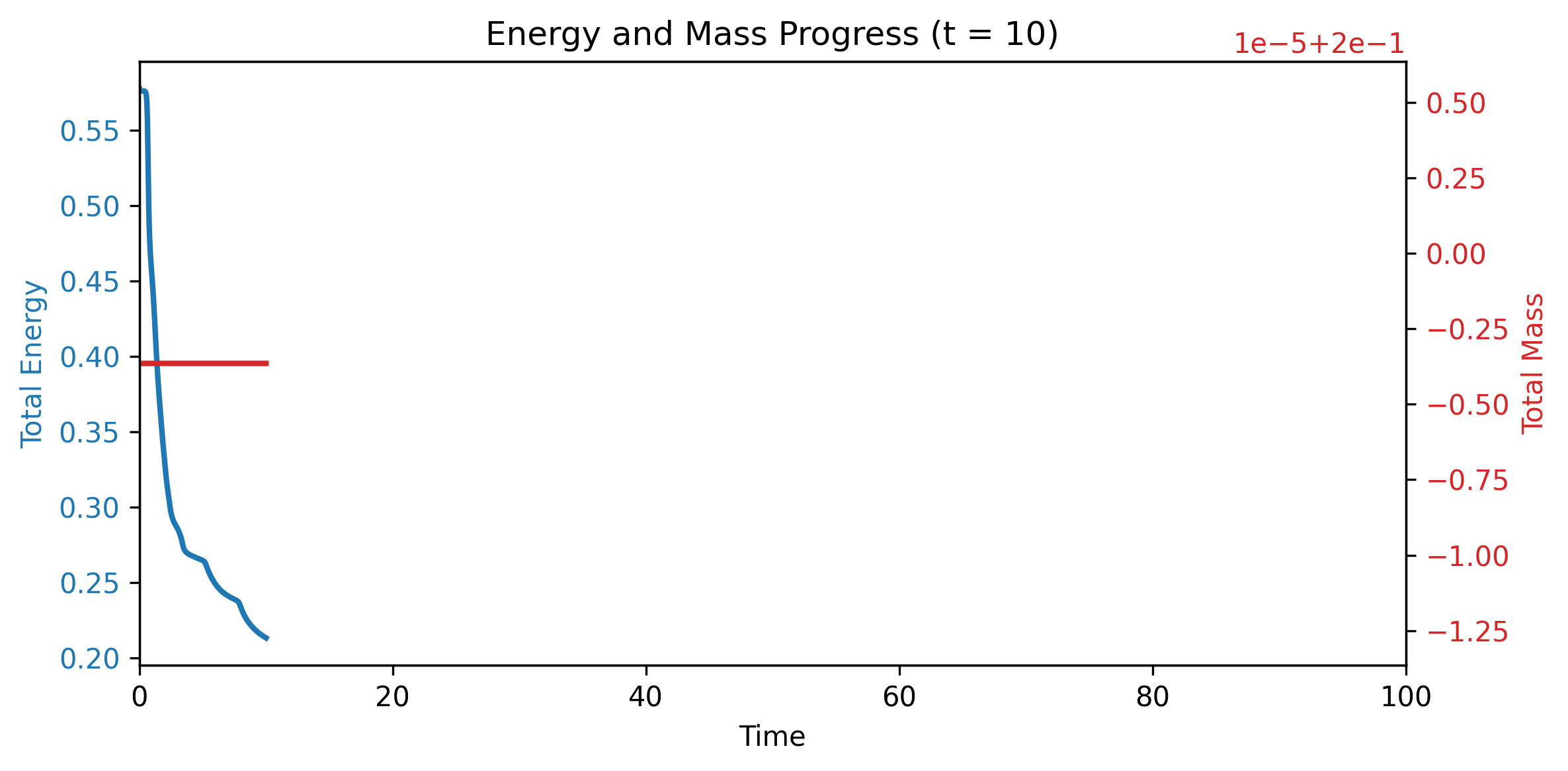}
        \centerline{(d) Energy and mass up to $t=10$}
    \end{minipage}

    \vspace{0.3cm}

    \begin{minipage}{0.31\textwidth}
        \centering
        \includegraphics[width=\linewidth]{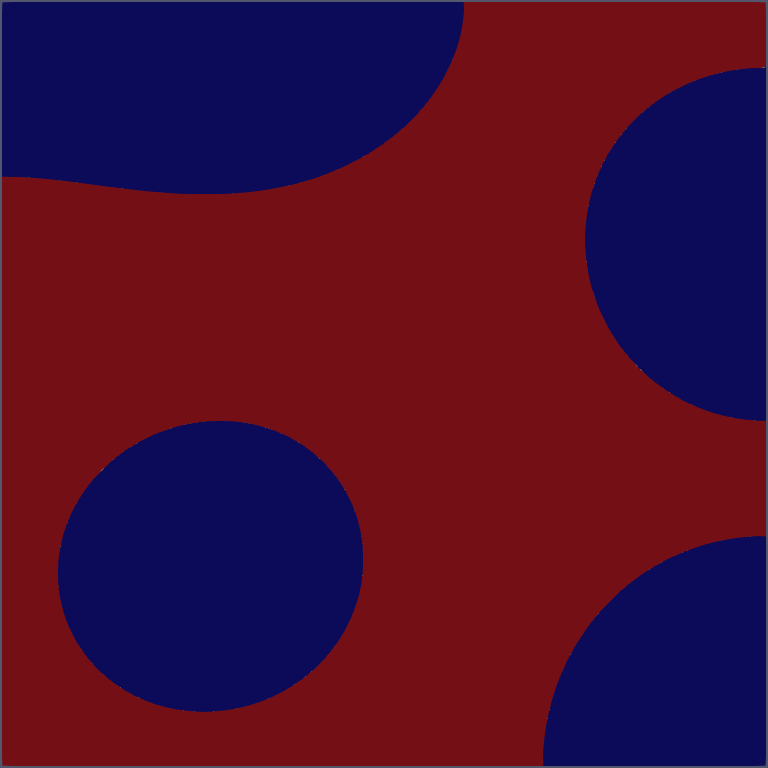}
        \centerline{(e) Phase field at $t=20$}
    \end{minipage}\hfill
    \begin{minipage}{0.60\textwidth}
        \centering
        \includegraphics[width=\linewidth]{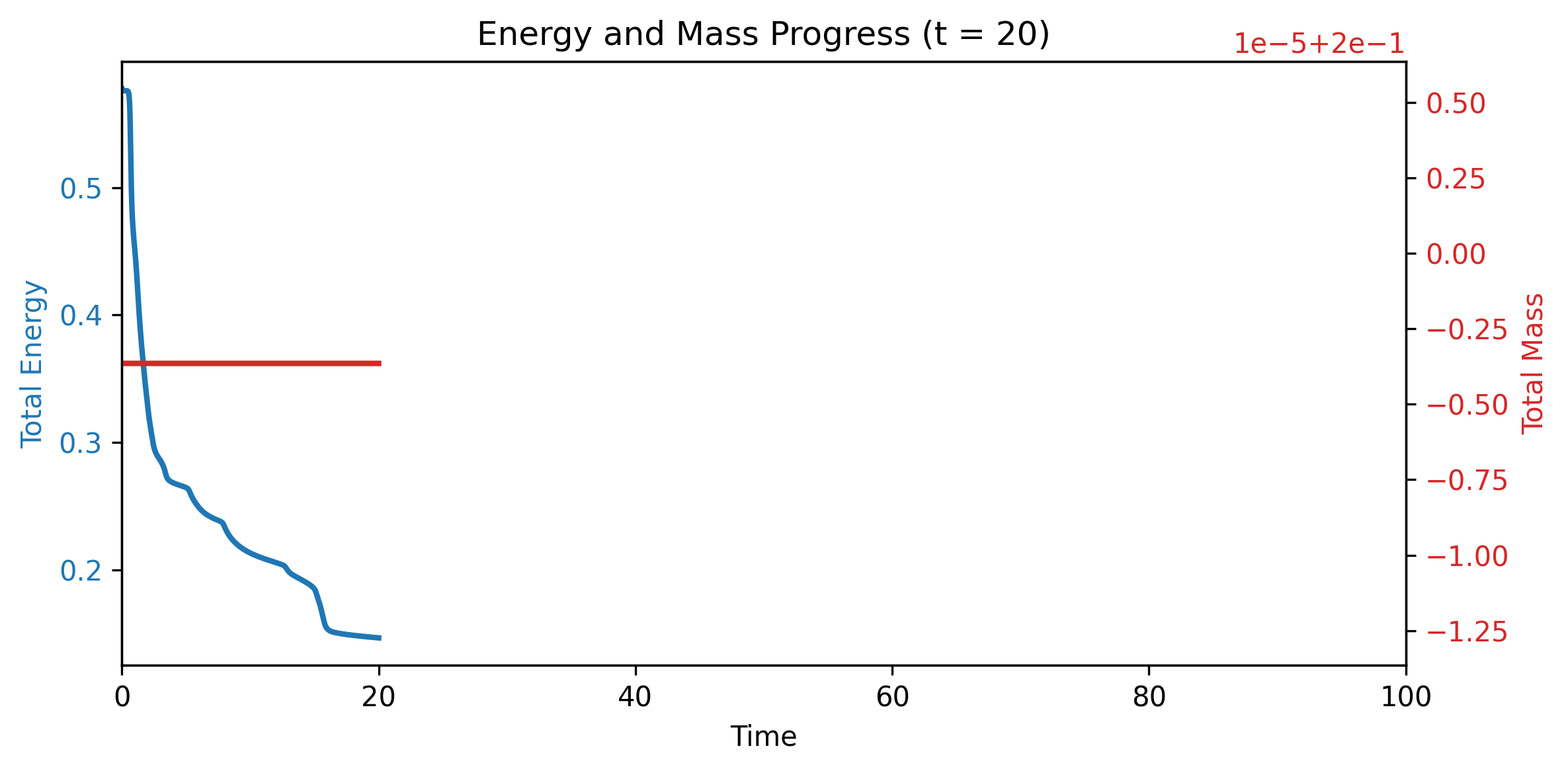}
        \centerline{(f) Energy and mass up to $t=20$}
    \end{minipage}

    \caption{Early-stage spinodal decomposition ($0\leq t\leq20$). The left column shows the evolution of the phase field, while the right column shows the corresponding computed energy and phase mass.}
    \label{fig:spinodal_energy_mass_part1}
\end{figure}

\clearpage

\begin{figure}[htpb]
    \centering

    \begin{minipage}{0.32\textwidth}
        \centering
        \includegraphics[width=\linewidth]{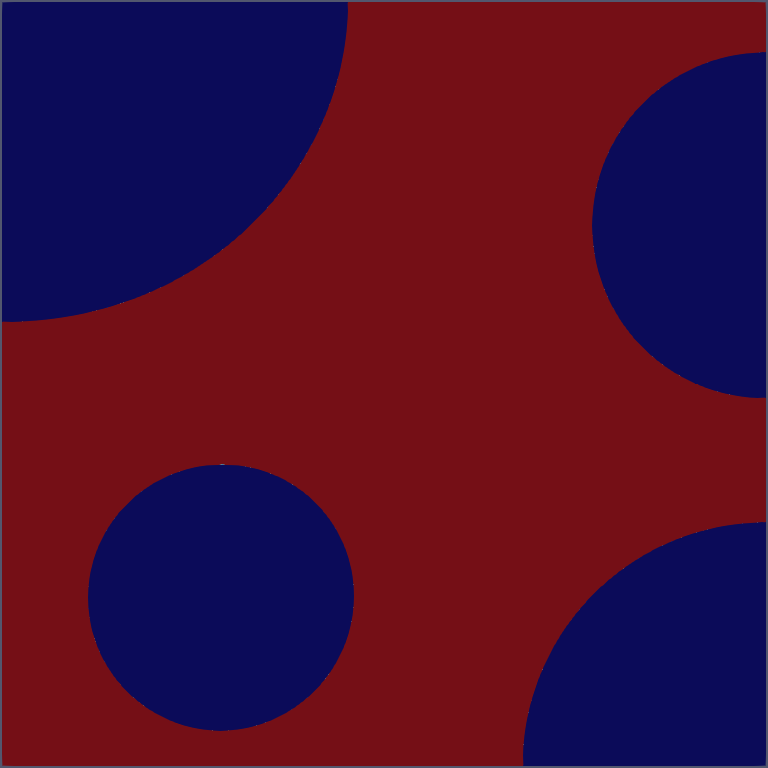}
        \centerline{(a) Phase field at $t=40$}
    \end{minipage}\hfill
    \begin{minipage}{0.60\textwidth}
        \centering
        \includegraphics[width=\linewidth]{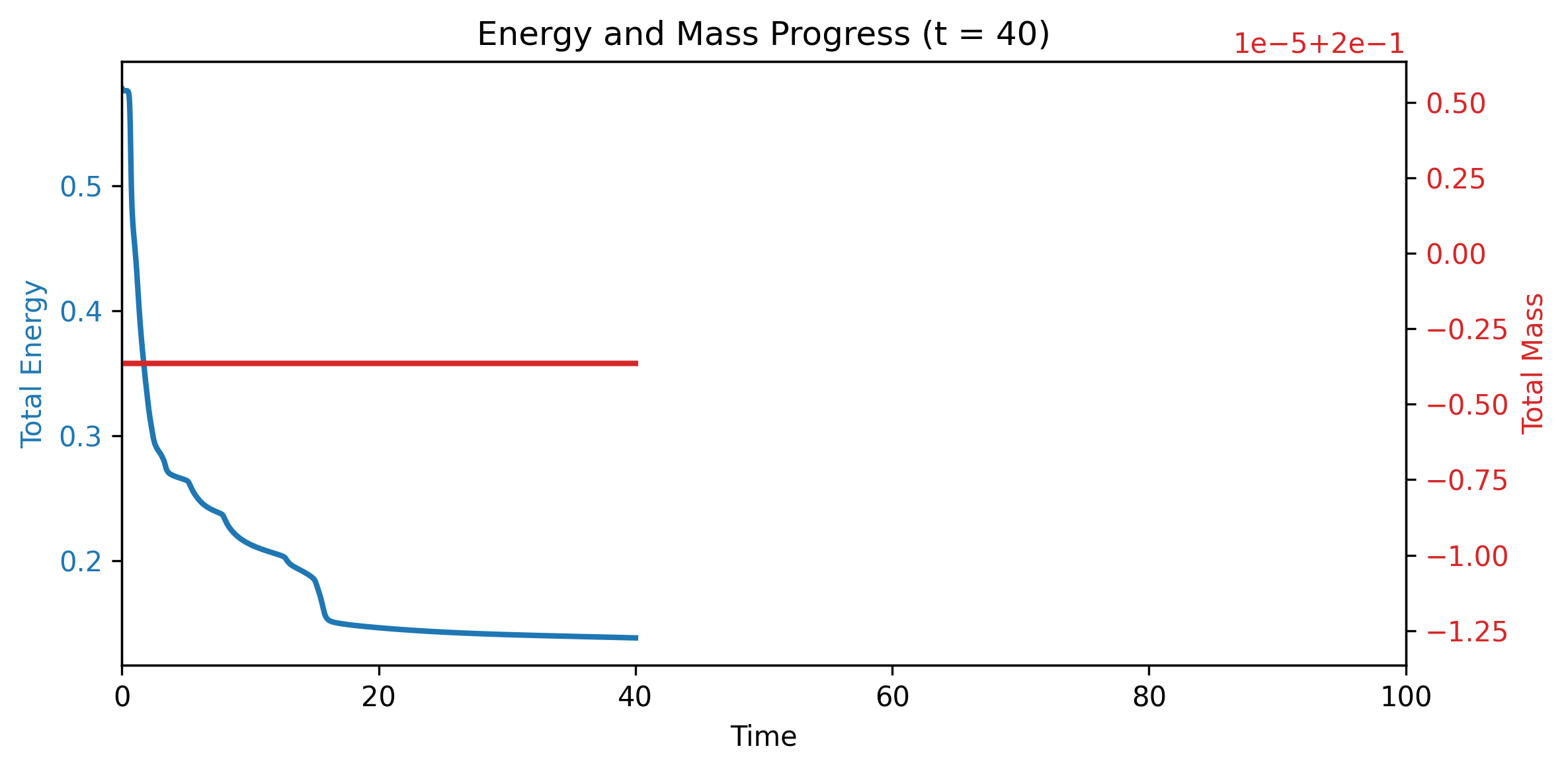}
        \centerline{(b) Energy and mass up to $t=40$}
    \end{minipage}

    \vspace{0.3cm}

    \begin{minipage}{0.32\textwidth}
        \centering
        \includegraphics[width=\linewidth]{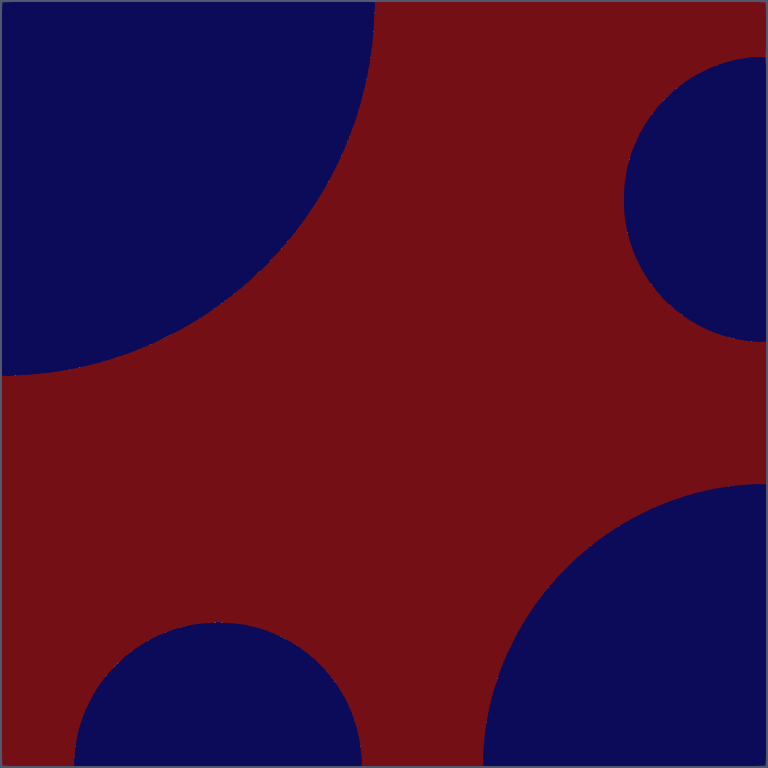}
        \centerline{(c) Phase field at $t=80$}
    \end{minipage}\hfill
    \begin{minipage}{0.60\textwidth}
        \centering
        \includegraphics[width=\linewidth]{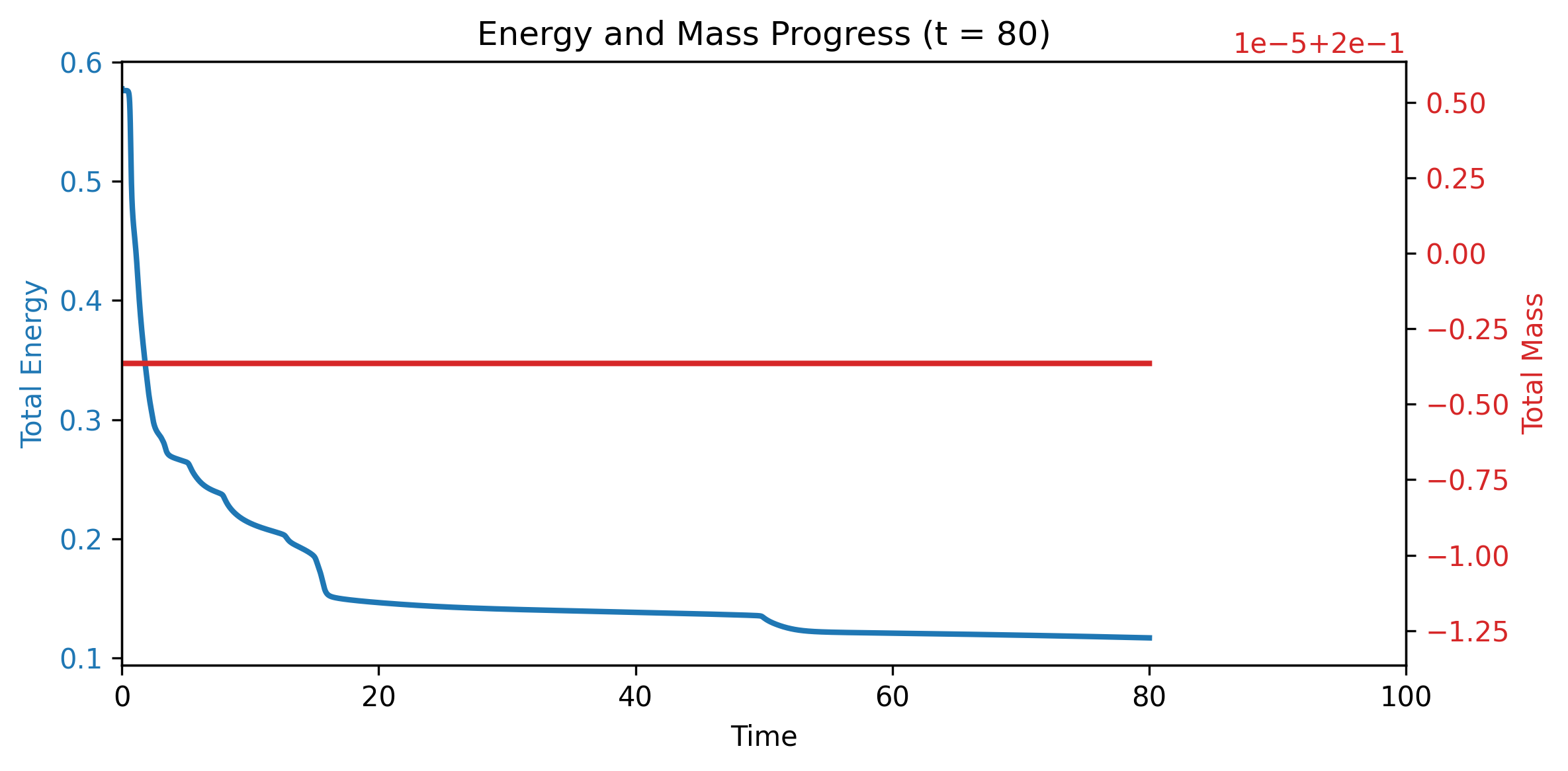}
        \centerline{(d) Energy and mass up to $t=80$}
    \end{minipage}

    \vspace{0.3cm}

    \begin{minipage}{0.32\textwidth}
        \centering
        \includegraphics[width=\linewidth]{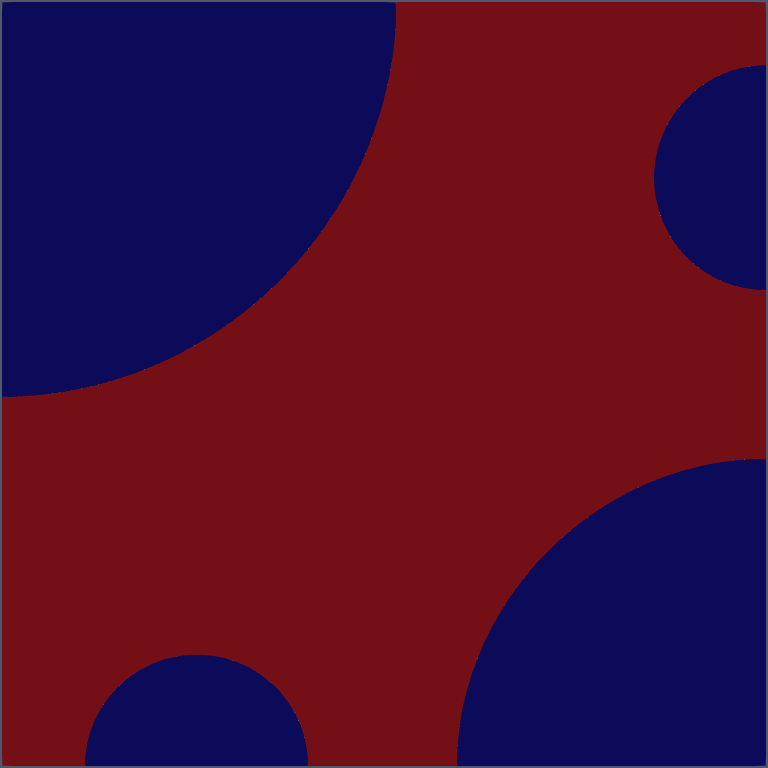}
        \centerline{(e) Phase field at $t=100$}
    \end{minipage}\hfill
    \begin{minipage}{0.60\textwidth}
        \centering
        \includegraphics[width=\linewidth]{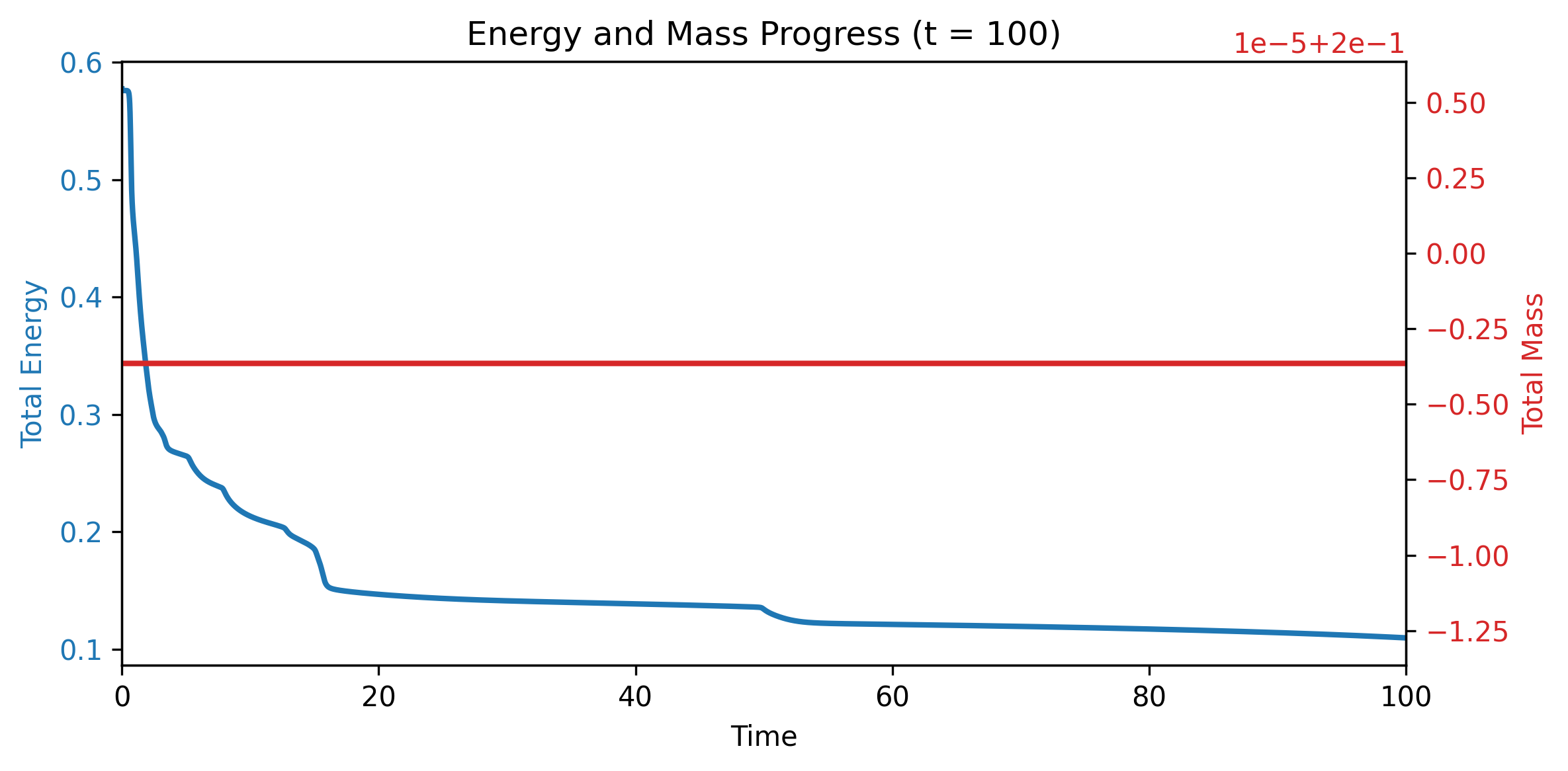}
        \centerline{(f) Energy and mass up to $t=100$}
    \end{minipage}

    \caption{Late-stage spinodal decomposition ($40\leq t\leq100$). The system undergoes continued coarsening while the computed energy decreases and the phase mass remains conserved to numerical precision.}
    \label{fig:spinodal_energy_mass_part2}
\end{figure}

\subsection{Shape Relaxation of a Square Droplet}

To evaluate the ability of the algorithm to resolve surface-tension-driven interfacial motion, we consider the relaxation of an initially square droplet. At fixed enclosed area, the circular configuration minimizes interfacial length. Consequently, the large local curvature associated with the corners of the initial square generates chemical-potential gradients that drive the interface toward a smoother configuration.

The simulation is performed on
\[
\Omega=[0,1]^2
\]
using a $256\times256$ uniform triangular mesh. The discrete phase field is initialized by assigning
\[
\phi=1
\]
inside
\[
[0.4,0.6]\times[0.4,0.6]
\]
and
\[
\phi=-1
\]
outside this region. In the finite-element representation, this piecewise initialization is resolved over the underlying mesh. The initial velocity and pressure fields are zero, and no-slip boundary conditions are imposed on all walls. The specified parameters are
\[
\Delta t=0.005,\qquad
\nu=0.1,\qquad
\eta=0.005,\qquad
\lambda=2.5\times10^{-5},\qquad
M=0.01.
\]

Figure \ref{fig:shape_relax} shows the evolution of the phase field. The corners of the initial square, Figure \ref{fig:shape_relax}(a), rapidly smooth as the interface relaxes. The droplet then passes through a sequence of rounded-square configurations for approximately $0.035\leq t\leq0.14$ and approaches an approximately circular equilibrium by $t=1.0$. The computation remains stable throughout the simulated interval and preserves the enclosed phase mass to numerical precision.

\begin{figure}[htpb]
    \centering
    \begin{minipage}{0.32\textwidth}
        \centering
        \includegraphics[width=\linewidth]{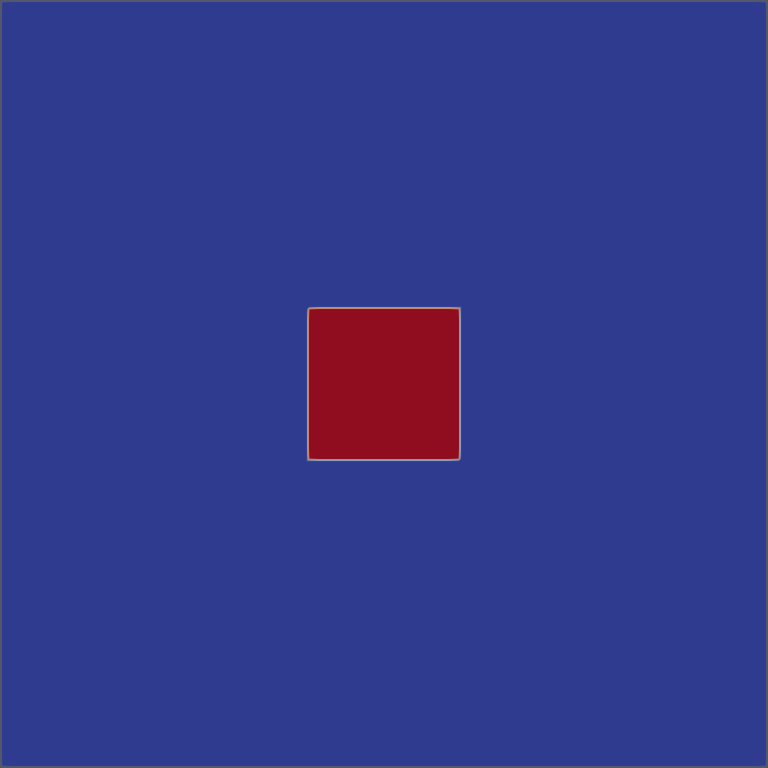}
        \centerline{(a) $t=0.00$}
    \end{minipage}\hfill
    \begin{minipage}{0.32\textwidth}
        \centering
        \includegraphics[width=\linewidth]{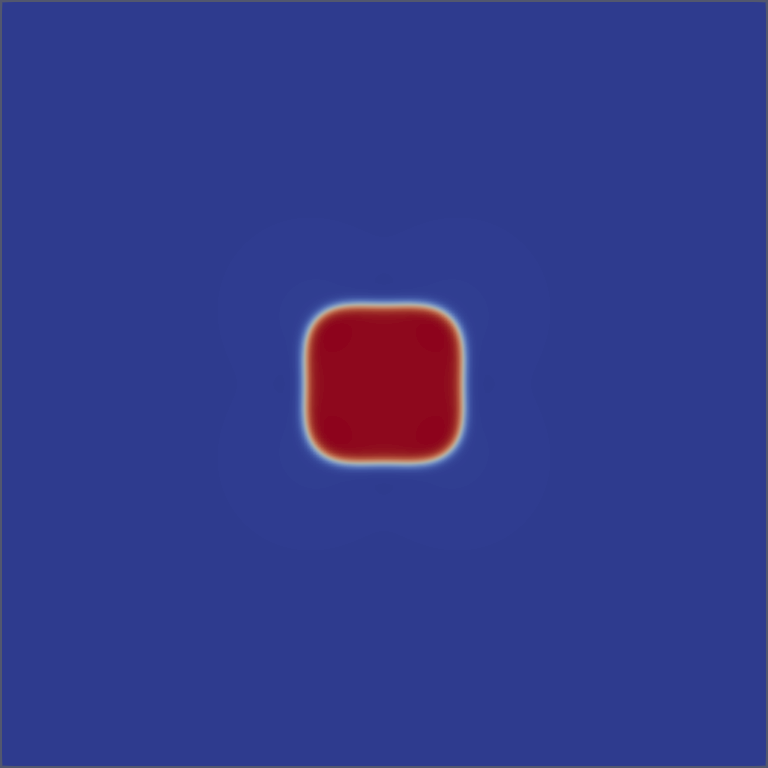}
        \centerline{(b) $t=0.035$}
    \end{minipage}\hfill
    \begin{minipage}{0.32\textwidth}
        \centering
        \includegraphics[width=\linewidth]{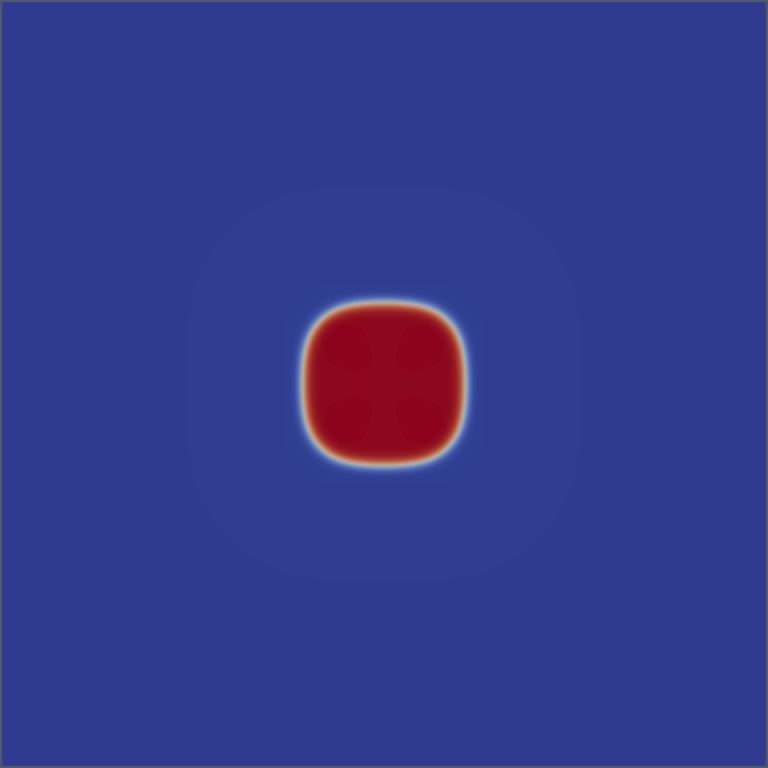}
        \centerline{(c) $t=0.07$}
    \end{minipage}

    \vspace{0.3cm}

    \begin{minipage}{0.32\textwidth}
        \centering
        \includegraphics[width=\linewidth]{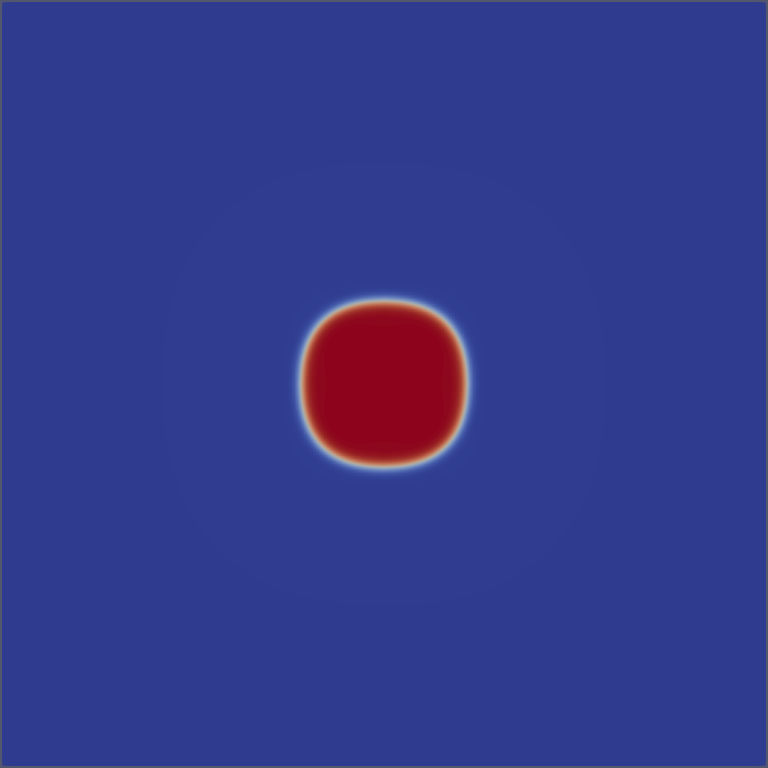}
        \centerline{(d) $t=0.105$}
    \end{minipage}\hfill
    \begin{minipage}{0.32\textwidth}
        \centering
        \includegraphics[width=\linewidth]{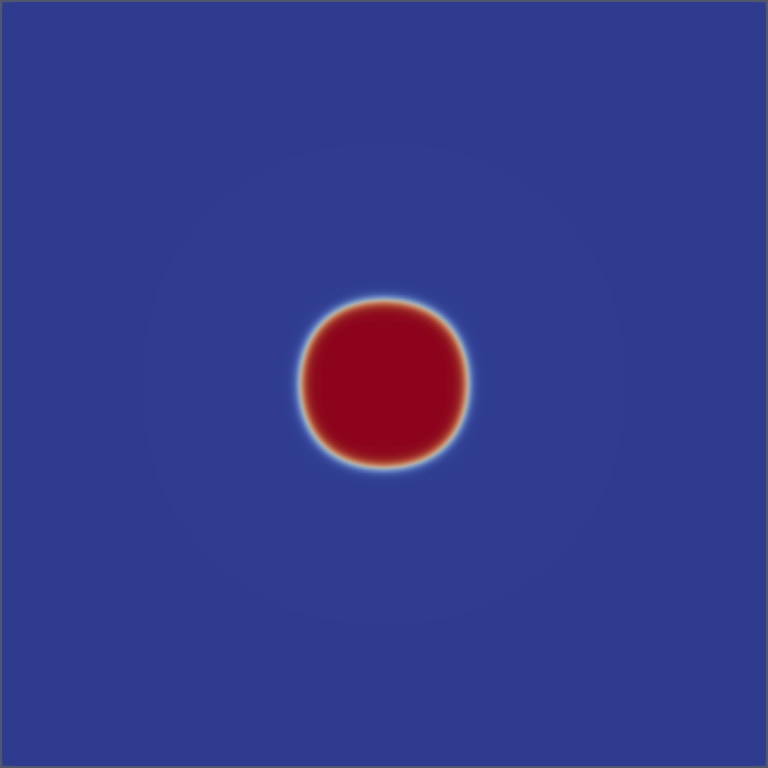}
        \centerline{(e) $t=0.14$}
    \end{minipage}\hfill
    \begin{minipage}{0.32\textwidth}
        \centering
        \includegraphics[width=\linewidth]{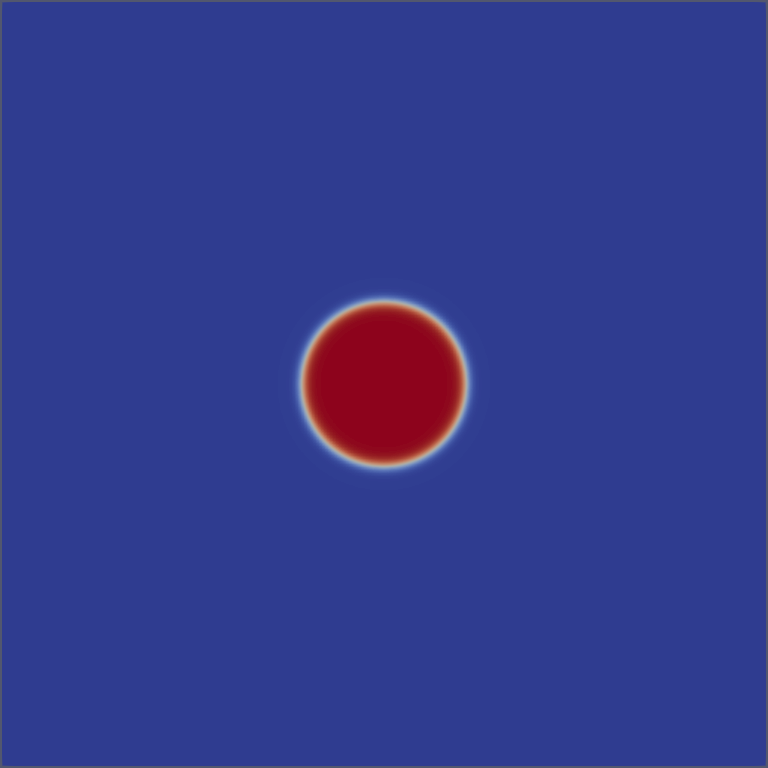}
        \centerline{(f) $t=1.0$}
    \end{minipage}

    \caption{Evolution of the phase field during relaxation of the initially square droplet. The high-curvature corners progressively smooth and the interface approaches an approximately circular equilibrium.}
    \label{fig:shape_relax}
\end{figure}

\subsection{Two-Phase Lid-Driven Cavity Flow}

To examine the behavior of the scheme under strong shear and substantial interfacial deformation, we simulate a two-phase lid-driven cavity flow on a $128\times128$ uniform mesh over
\[
\Omega=[0,1]^2.
\]
A horizontal diffuse interface is initialized at $y=0.5$ using
\[
\phi_0(x,y)
=
\tanh
\left(
\frac{0.5-y}{\sqrt{2}\eta}
\right),
\]
so that the $\phi\approx1$ phase lies below the $\phi\approx-1$ phase. No-slip conditions are imposed on the left, right, and bottom walls, while the top boundary is driven by the regularized horizontal velocity profile
\[
u(x,1)=16x^2(x-1)^2,
\]
which vanishes smoothly at the upper corners.

The parameters are
\[
\nu=0.002,\qquad
\eta=0.01,\qquad
\lambda=2\times10^{-6},\qquad
M=0.005,\qquad
\epsilon=10^{-5},
\]
and the system is integrated to $T=15$ using
\[
\Delta t=0.001
\]
with $P_2-P_1-P_2-P_2$ elements.

Figure \ref{fig:lid_driven_cavity} shows the phase-field evolution. The moving lid generates a clockwise primary vortex that deforms the initially horizontal interface. By $t=5$, the lower phase has been drawn into a pronounced interfacial wave. As the circulation develops further, approximately over $7.5\leq t\leq10$, the interface is stretched, rolled, and transported upward. At later times, the diffuse interface undergoes substantial folding and forms a thin spiraling structure. The computation remains stable over the full simulated interval and resolves the strongly deformed diffuse interface without visible spurious oscillations at the plotted resolution.

\begin{figure}[htpb]
    \centering

    \begin{minipage}{0.25\textwidth}
        \centering
        \includegraphics[width=\linewidth]{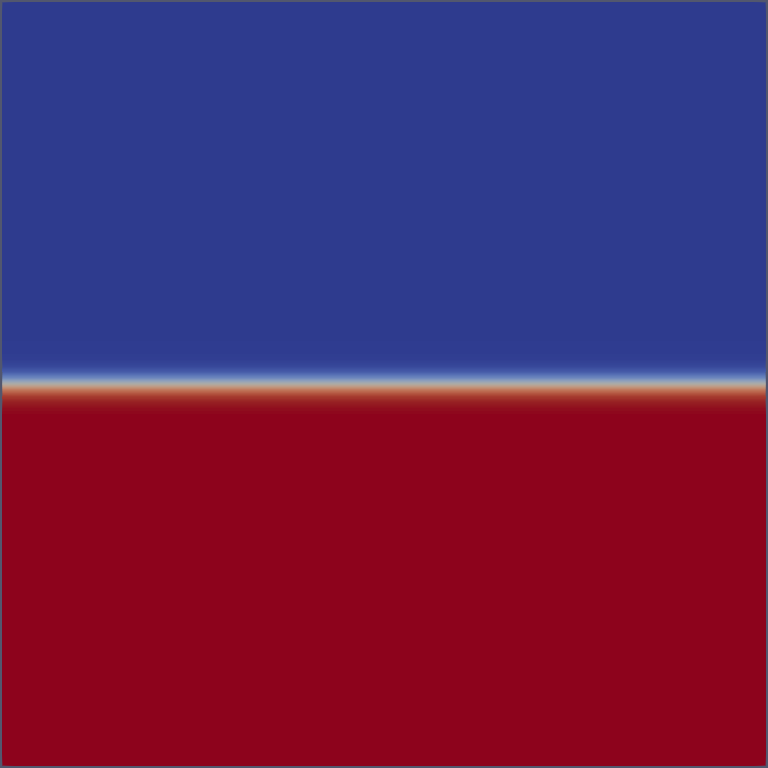}
        \centerline{(a) $t=0.0$}
    \end{minipage}\hfill
    \begin{minipage}{0.25\textwidth}
        \centering
        \includegraphics[width=\linewidth]{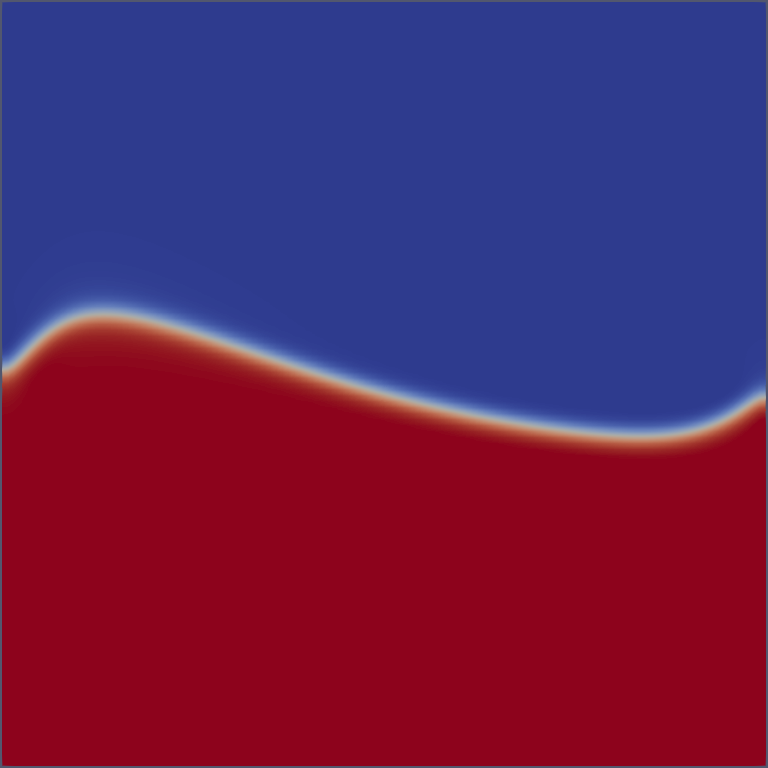}
        \centerline{(b) $t=2.5$}
    \end{minipage}\hfill
    \begin{minipage}{0.25\textwidth}
        \centering
        \includegraphics[width=\linewidth]{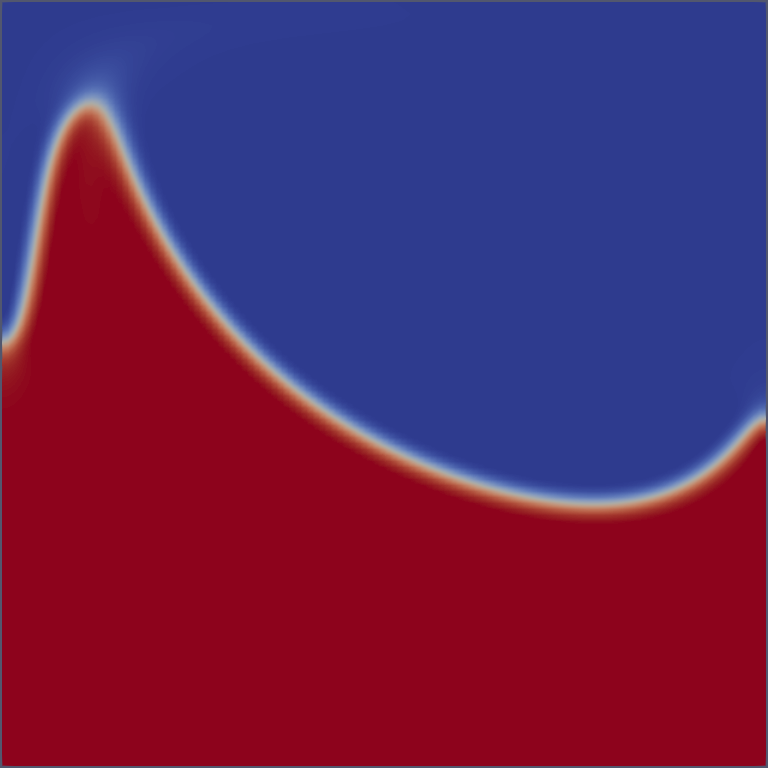}
        \centerline{(c) $t=5.0$}
    \end{minipage}

    \vspace{0.1cm}

    \begin{minipage}{0.25\textwidth}
        \centering
        \includegraphics[width=\linewidth]{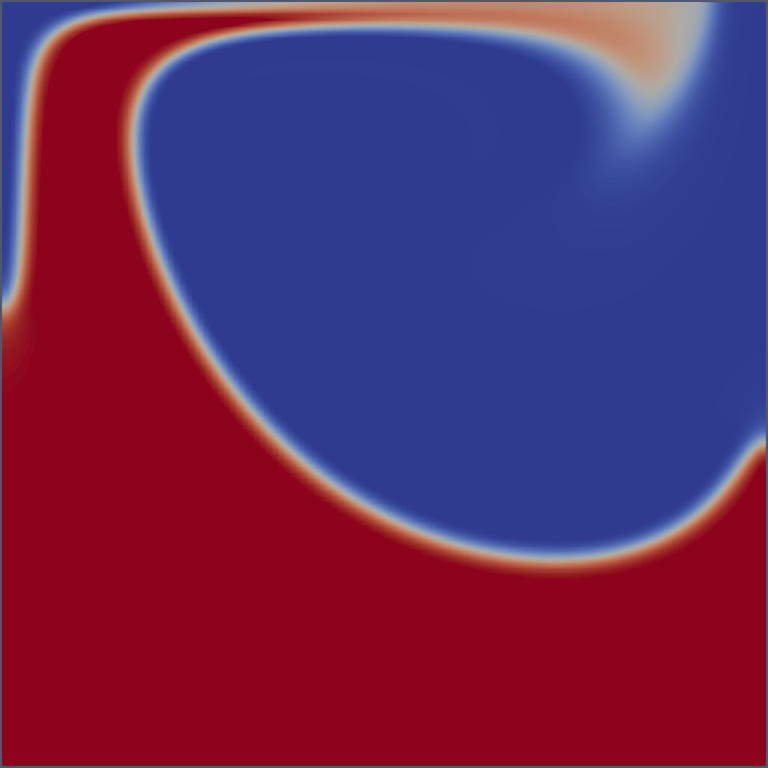}
        \centerline{(d) $t=7.5$}
    \end{minipage}\hfill
    \begin{minipage}{0.25\textwidth}
        \centering
        \includegraphics[width=\linewidth]{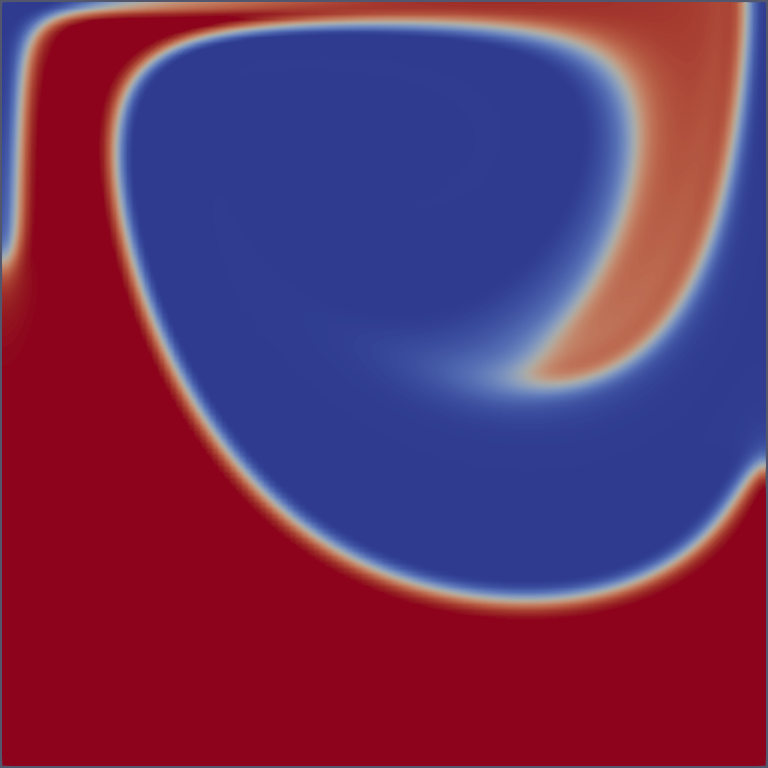}
        \centerline{(e) $t=10.0$}
    \end{minipage}\hfill
    \begin{minipage}{0.25\textwidth}
        \centering
        \includegraphics[width=\linewidth]{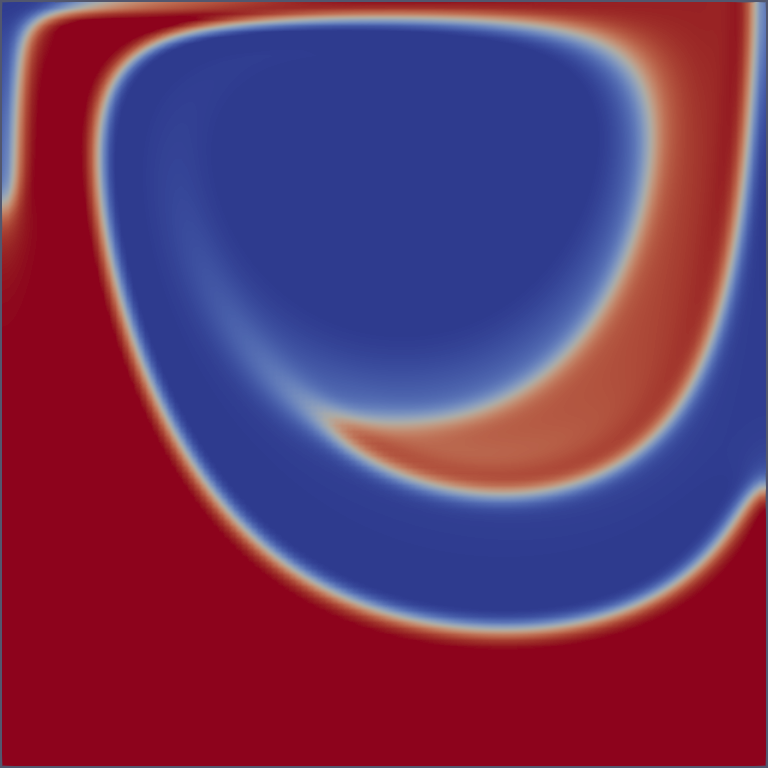}
        \centerline{(f) $t=12.5$}
    \end{minipage}

    \vspace{0.1cm}

    \begin{minipage}{0.25\textwidth}
        \centering
        \includegraphics[width=\linewidth]{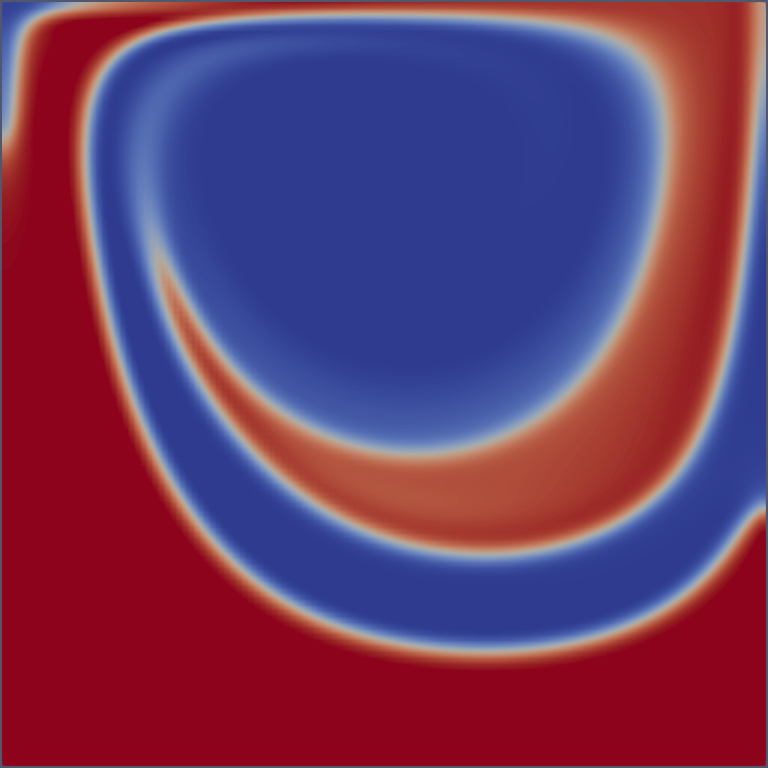}
        \centerline{(g) $t=15.0$}
    \end{minipage}

    \caption{Phase-field evolution for the two-phase lid-driven cavity problem. The clockwise circulation induced by the moving lid progressively stretches and rolls the initially horizontal interface into a strongly deformed spiraling structure.}
    \label{fig:lid_driven_cavity}
\end{figure}

\subsection{Rayleigh--Taylor Instability}

To assess the ability of the numerical method to capture strongly nonlinear interfacial dynamics driven by density differences, we consider the classical Rayleigh--Taylor instability. The instability occurs when a denser fluid is positioned above a lighter fluid in a gravitational field. For this benchmark, we use the following buoyancy-coupled phase-field system:
\begin{subequations}
    \begin{equation}
        \phi_t+\nabla\cdot(\phi\mathbf{u})
        =
        M\Delta\mu,
    \end{equation}
    \begin{equation}
        \mu
        =
        \lambda(-\Delta\phi+f(\phi)),
    \end{equation}
    \begin{equation}
        \rho_0
        \left(
        \mathbf{u}_t+\mathbf{u}\cdot\nabla\mathbf{u}
        \right)
        +
        \nabla p
        -
        \nu\Delta\mathbf{u}
        =
        -\rho(\phi)g\mathbf{j},
    \end{equation}
    \begin{equation}
        \nabla\cdot\mathbf{u}=0.
    \end{equation}
\end{subequations}
Here,
\[
f(\phi)
=
\frac{1}{\eta^2}\phi(\phi^2-1),
\]
and the density is interpolated through the phase field according to
\[
\rho(\phi)
=
\rho_1\left(\frac{1+\phi}{2}\right)
+
\rho_2\left(\frac{1-\phi}{2}\right).
\]
We denote the upward unit vector by
\[
\mathbf{j}=(0,1)^T,
\]
so that the gravitational force $-\rho(\phi)g\mathbf{j}$ acts downward.

The computation is performed on the rectangular domain
\[
\Omega=[0,1]\times[0,4]
\]
using a $128\times512$ uniform triangular mesh. The initial interface is centered at $y=2$ and perturbed according to
\[
y_0(x)
=
2+0.1\cos(2\pi x).
\]
The phase field is initialized so that its diffuse interface is centered on this perturbed curve. The denser fluid,
\[
\rho_1=3,
\]
is placed above the interface, while the lighter fluid,
\[
\rho_2=1,
\]
is placed below it. The background density is taken as
\[
\rho_0
=
\frac{\rho_1+\rho_2}{2}.
\]
The gravitational acceleration is
\[
g=10,
\]
and no-slip boundary conditions are imposed on all walls. The remaining parameters are
\[
\eta=0.01,\qquad
\lambda=10^{-4},\qquad
M=0.1.
\]
The simulation is advanced to
\[
T=1.2
\]
using
\[
\Delta t=2\times10^{-4}
\]
and $P_2-P_1-P_2-P_2$ elements. Two viscous regimes are considered:
\[
\nu=0.01
\qquad\text{and}\qquad
\nu=0.001.
\]

Figure \ref{fig:rt_instability} shows the evolution of the phase field for the two viscous regimes. For the larger viscosity, $\nu=0.01$ (top row), viscous damping suppresses small-scale interfacial roll-up. The denser fluid descends in a relatively smooth and symmetric spike while the lighter fluid rises around it in broad bubble-like structures.

For the smaller viscosity, $\nu=0.001$ (bottom row), inertial effects are stronger and the shear layers along the sides of the descending spike become increasingly pronounced. By approximately $t=0.72$, the interface develops secondary roll-up structures consistent with Kelvin--Helmholtz-like shear instability. These structures continue to deform and entrain the surrounding fluid as the dense spike descends.

The proposed discretization remains numerically stable over the simulated time interval for both viscosity values and captures the qualitative transition from a strongly damped Rayleigh--Taylor evolution to a substantially more intricate low-viscosity interfacial flow.

\begin{figure}[h]
    \centering

    \begin{minipage}{0.10\textwidth}
        \centering
        \includegraphics[width=\linewidth]{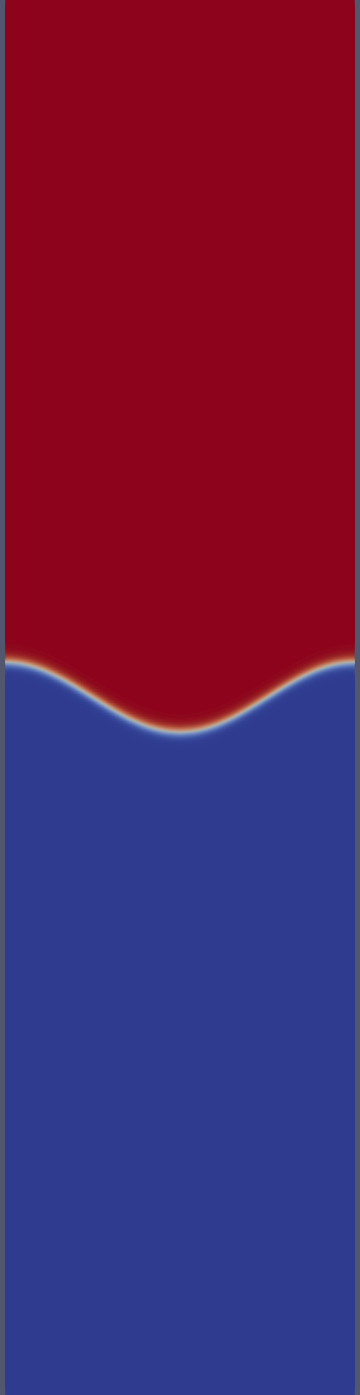}
        \centerline{$t=0.0$}
    \end{minipage}\hfill
    \begin{minipage}{0.10\textwidth}
        \centering
        \includegraphics[width=\linewidth]{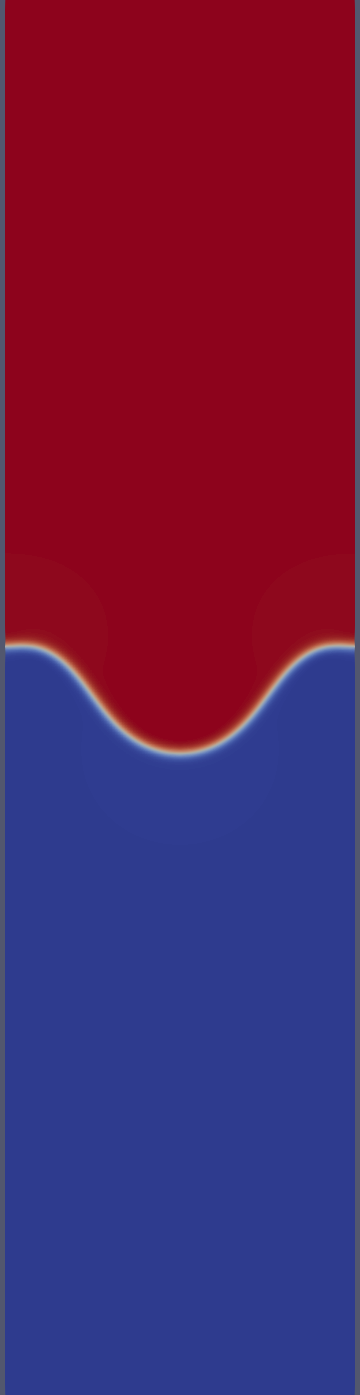}
        \centerline{$t=0.24$}
    \end{minipage}\hfill
    \begin{minipage}{0.10\textwidth}
        \centering
        \includegraphics[width=\linewidth]{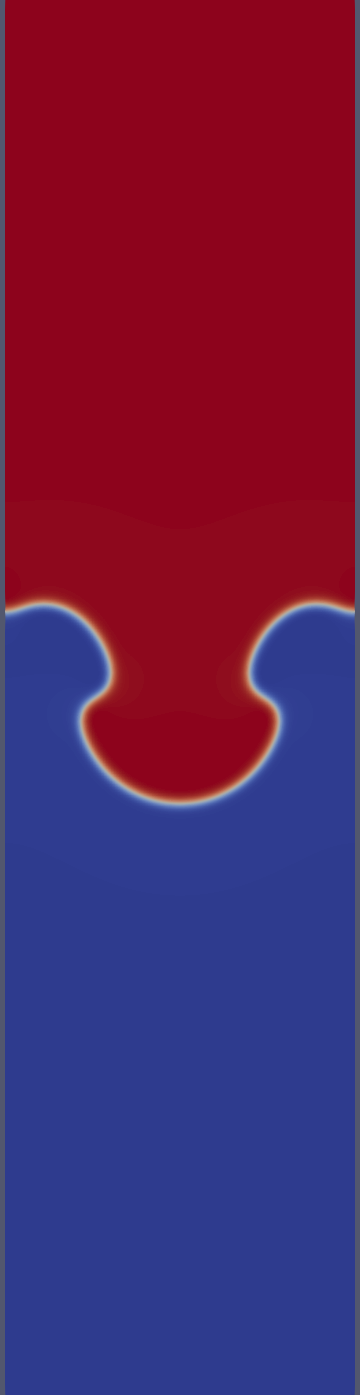}
        \centerline{$t=0.48$}
    \end{minipage}\hfill
    \begin{minipage}{0.10\textwidth}
        \centering
        \includegraphics[width=\linewidth]{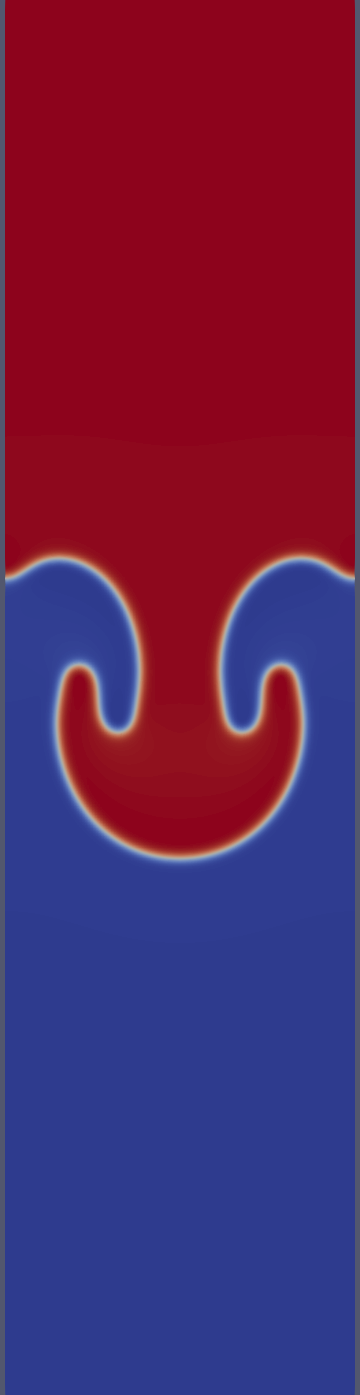}
        \centerline{$t=0.72$}
    \end{minipage}\hfill
    \begin{minipage}{0.10\textwidth}
        \centering
        \includegraphics[width=\linewidth]{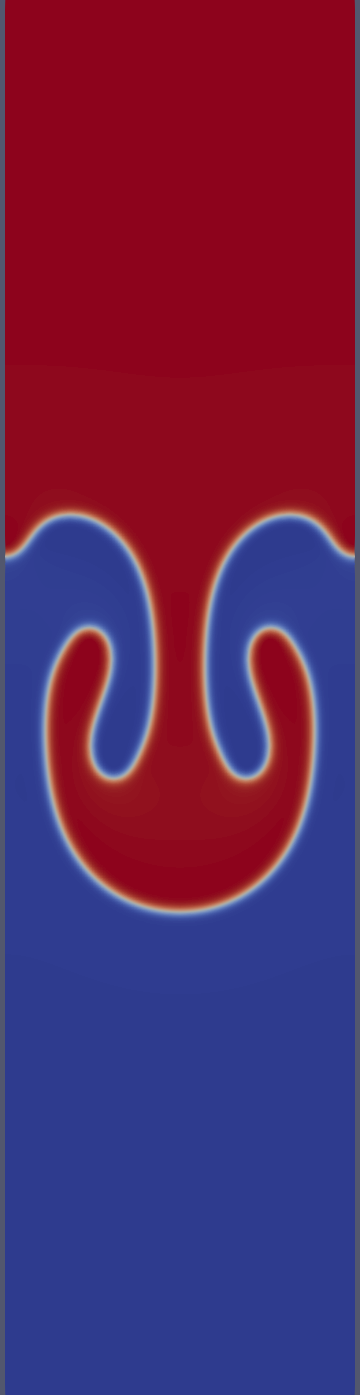}
        \centerline{$t=0.96$}
    \end{minipage}\hfill
    \begin{minipage}{0.10\textwidth}
        \centering
        \includegraphics[width=\linewidth]{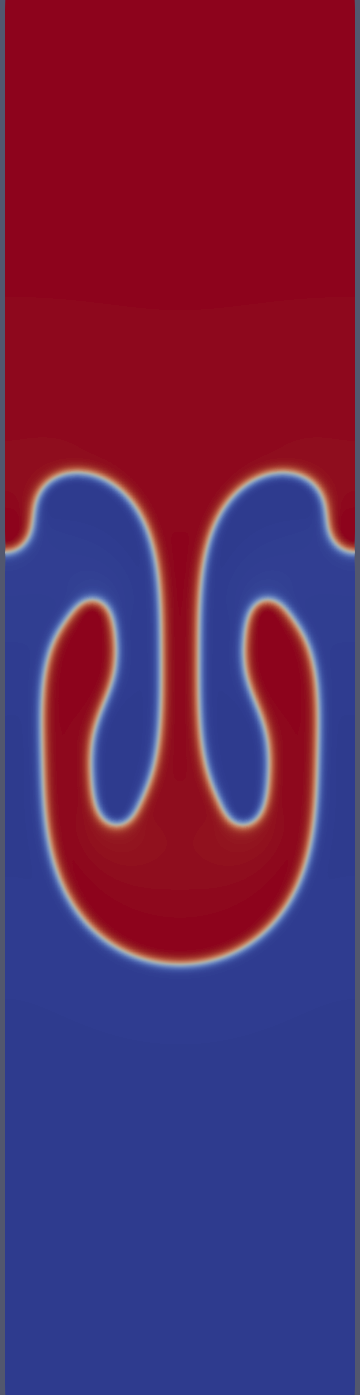}
        \centerline{$t=1.20$}
    \end{minipage}

    \vspace{0.1cm}

    \begin{minipage}{0.10\textwidth}
        \centering
        \includegraphics[width=\linewidth]{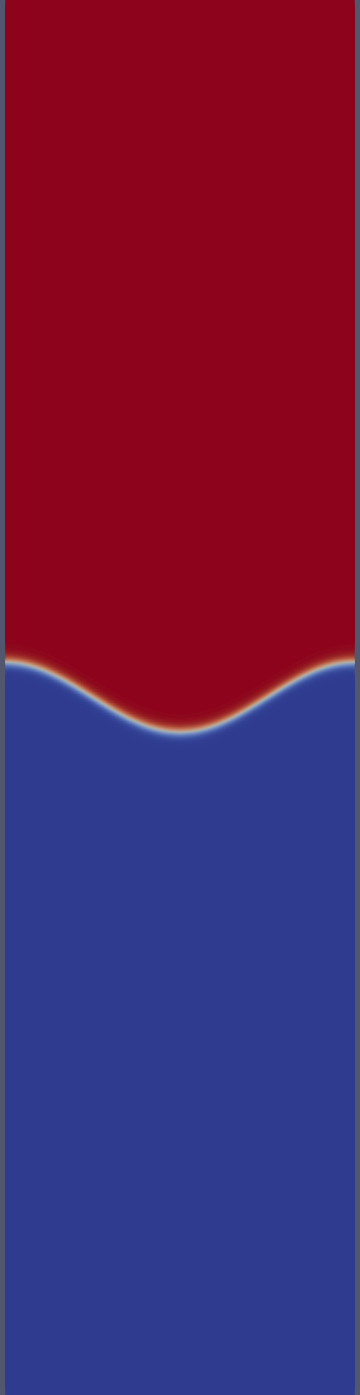}
        \centerline{$t=0.0$}
    \end{minipage}\hfill
    \begin{minipage}{0.10\textwidth}
        \centering
        \includegraphics[width=\linewidth]{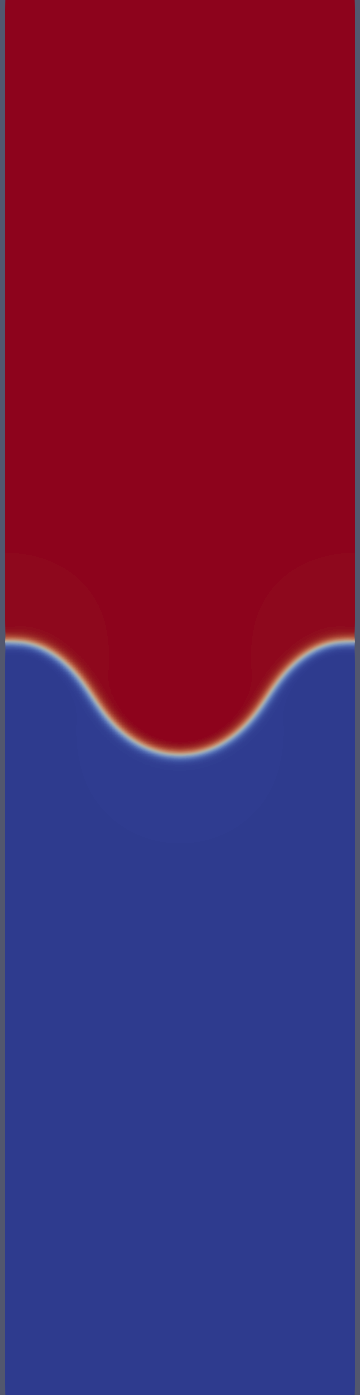}
        \centerline{$t=0.24$}
    \end{minipage}\hfill
    \begin{minipage}{0.10\textwidth}
        \centering
        \includegraphics[width=\linewidth]{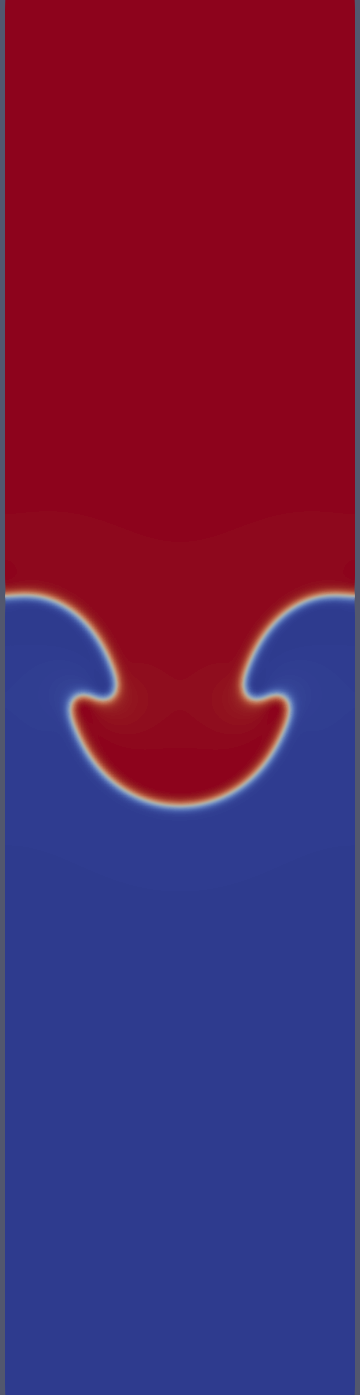}
        \centerline{$t=0.48$}
    \end{minipage}\hfill
    \begin{minipage}{0.10\textwidth}
        \centering
        \includegraphics[width=\linewidth]{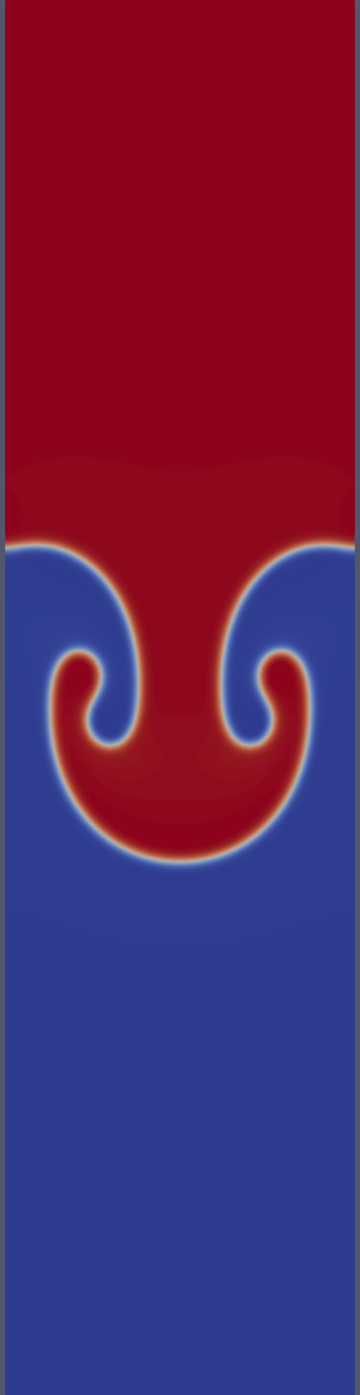}
        \centerline{$t=0.72$}
    \end{minipage}\hfill
    \begin{minipage}{0.10\textwidth}
        \centering
        \includegraphics[width=\linewidth]{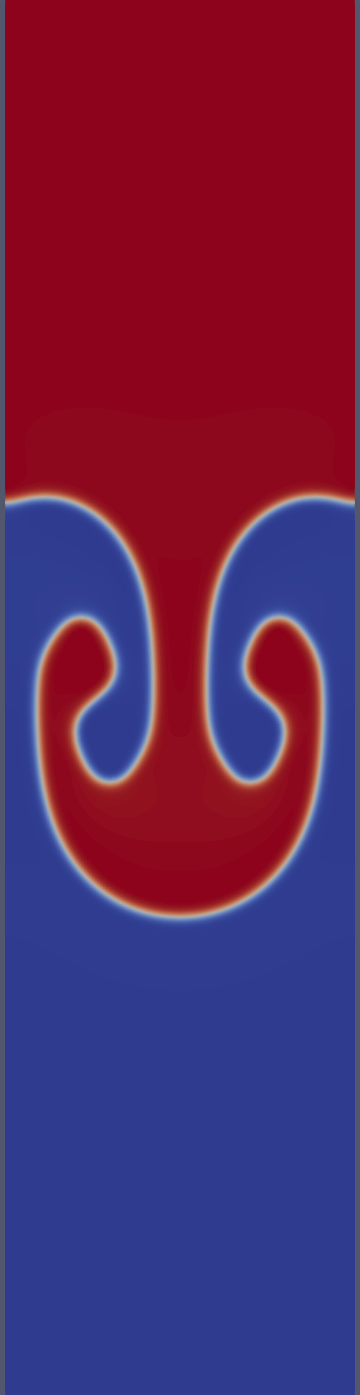}
        \centerline{$t=0.96$}
    \end{minipage}\hfill
    \begin{minipage}{0.10\textwidth}
        \centering
        \includegraphics[width=\linewidth]{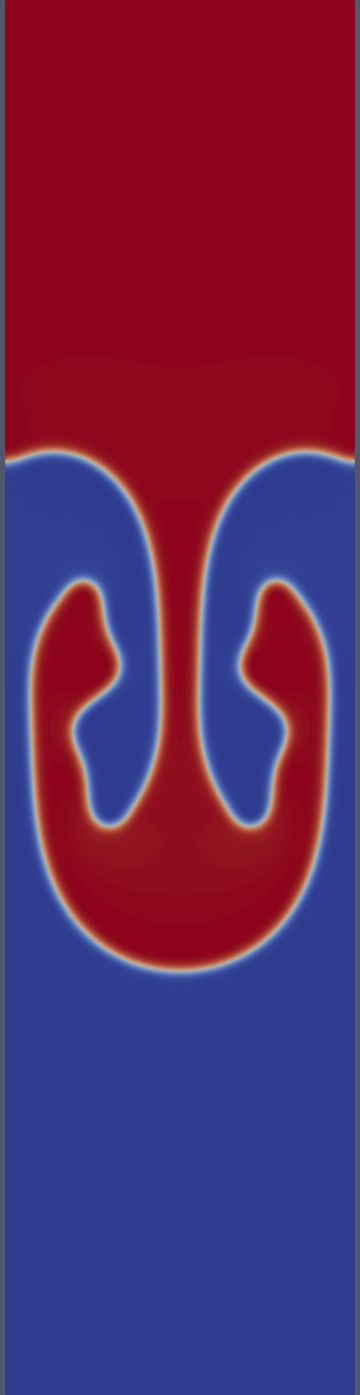}
        \centerline{$t=1.20$}
    \end{minipage}

    \caption{Evolution of the Rayleigh--Taylor instability for two viscous regimes. \textbf{Top row:} For $\nu=0.01$, stronger viscous damping produces a relatively smooth descending spike and broad rising structures. \textbf{Bottom row:} For $\nu=0.001$, stronger inertial and shear effects lead to secondary interfacial roll-up and more complex mushroom-like structures.}
    \label{fig:rt_instability}
\end{figure}

\section{Conclusion}
In this work, we developed a family of second-order, linear time-stepping methods for the matched-density Cahn--Hilliard--Navier--Stokes equations. By combining extrapolation, an auxiliary-variable reformulation of the nonlinear free-energy term, and curvature-stabilized interpolation, the proposed method avoids nonlinear iterations while retaining second-order temporal accuracy. A discrete energy estimate established the unconditional long-time stability of the scheme for $\theta\in(1/2,1]$ and $\epsilon\geq 0$.
The numerical experiments confirmed the expected temporal convergence rates and showed accurate mass conservation and energy dissipation. The spinodal decomposition, droplet relaxation, lid-driven cavity, and Rayleigh--Taylor simulations further showed that the method can capture complex interfacial dynamics over a range of flow regimes. Overall, the proposed scheme provides an efficient and stable alternative to fully nonlinear CHNS discretizations while preserving the principal physical and mathematical structures of the model.

\bibliographystyle{unsrturl} 
\bibliography{CHNS-ref} 

\end{document}